\documentclass[12 pt]{article}
\usepackage{amsmath,amsfonts,amsthm,amssymb,mathrsfs,latexsym,paralist,xy}
\usepackage{tikz}
\usetikzlibrary{arrows,automata}
\tikzstyle{V}=[fill=black,circle,scale=0.4, outer sep = 4pt]

\usepackage{soul}

\newtheorem{thm}{Theorem}[section]
\newtheorem{prop}[thm]{Proposition}
\newtheorem{cor}[thm]{Corollary}
\newtheorem{lemma}[thm]{Lemma}
\theoremstyle{remark}
\newtheorem{rmk}[thm]{Remark}
\newtheorem{example}[thm]{Example}
\theoremstyle{definition}
\newtheorem{defn}[thm]{Definition}

\newcommand{\bi}{\begin{itemize}}
\newcommand{\ei}{\end{itemize}}
\newcommand{\be}{\begin{enumerate}}
\newcommand{\ee}{\end{enumerate}}

\renewcommand{\H}{\mathcal{H}}

\newcommand{\N}{\mathbb{N}}
\newcommand{\Z}{\mathbb{Z}}

\newcommand{\BH}{\mathcal{B}(\mathcal{H})}

\providecommand{\keywords}[1]{{\textit{Keywords and phrases:}} #1}
\providecommand{\classification}[1]{{\textit{2010 Mathematics Subject Classification:}} #1}

\newcommand{\Ran}{\operatorname{Range}}

\def\IoIIdimdots(#1/#2/#3,#4){\node at (#1,#4) {$.$};\node at (#2,#4) {$.$};\node at (#3,#4) {$.$};}
\def\IIoIIdimdots(#1,#2/#3/#4){\node at (#1,#2) {$.$};\node at (#1,#3) {$.$};\node at (#1,#4) {$.$};}

\def\IoIIIdimdots(#1/#2/#3,#4,#5){\node at (#1,#4,#5) {$.$};\node at (#2,#4,#5) {$.$};\node at (#3,#4,#5) {$.$};}
\def\IIoIIIdimdots(#1,#2/#3/#4,#5){\node at (#1,#2,#5) {$.$};\node at (#1,#3,#5) {$.$};\node at (#1,#4,#5) {$.$};}
\def\IIIoIIIdimdots(#1,#2,#3/#4/#5){\node at (#1,#2,#3) {$.$};\node at (#1,#2,#4) {$.$};\node at (#1,#2,#5) {$.$};}

\usepackage[margin=1in]{geometry}
\AtEndDocument{\bigskip{\small%
  \textsc{Carla Farsi, Judith Packer : Department of Mathematics, University of Colorado at Boulder, Boulder, CO 80309-0395, USA.} \par
  \textit{E-mail address}: \texttt{carla.farsi@colorado.edu, packer@euclid.colorado.edu} \par
  \addvspace{\medskipamount}
  \textsc{Elizabeth Gillaspy : Department of Mathematics, University of Montana, 32 Campus Drive \#0864, Missoula, MT 59812-0864, USA.}\par
  \textit{E-mail address}: \texttt{elizabeth.gillaspy@mso.umt.edu}\par
  \addvspace{\medskipamount}
  \textsc{Palle Jorgensen : Department of Mathematics, 14 MLH, University of Iowa, Iowa City, IA 52242-1419, USA.}\par
  \textit{E-mail address}: \texttt{palle-jorgensen@uiowa.edu}\par
   \addvspace{\medskipamount}
  \textsc{Sooran Kang : College of General Education, Chung-Ang University, 84 Heukseok-ro, Dongjak-gu, Seoul, Republic of Korea.} \par
  \textit{E-mail address}, \texttt{sooran09@cau.ac.kr}
}}

\begin{document}

\title{Purely atomic representations of higher-rank graph $C^*$-algebras}
\author{Carla Farsi, Elizabeth Gillaspy, Palle Jorgensen,  Sooran Kang, and Judith Packer}
\date{\today}
\maketitle

\begin{abstract}
We study purely atomic representations of $C^*$-algebras associated to row-finite and source-free higher-rank graphs. 
We describe when purely atomic representations are unitarily equivalent and we give  necessary and sufficient conditions for a purely atomic representation to be irreducible in terms of the associated projection valued measure. We also investigate the relationship between purely atomic representations, monic representations and permutative representations, and we describe when a purely atomic representation admits a decomposition consisting of permutative representations.
\end{abstract}

\classification{46L05}

\keywords{Higher-rank graph $C^*$-algebras, Purely atomic representations; Projection valued measures; Decomposition of permutative representations}

\tableofcontents

\section{Introduction}

The theme of our paper falls at the crossroads of representation theory and the study of higher-rank graph $C^*$-algebras. The past two decades have seen a burst of research dealing with representations of classes of infinite $C^*$-algebras, which includes the Cuntz algebras and many Cuntz--Krieger algebras, as well as $C^*$-algebras associated to directed graphs and higher-rank graphs.

   Classical results such as J. Glimm's pioneering paper \cite{Glimm} indicate that, despite their broad applicability, the representations of purely infinite $C^*$-algebras (such as the Cuntz algebras as well as many $k$-graph $C^*$-algebras) do not admit a Borel cross-section and hence cannot be completely described.  Researchers are consequently led to study specific families of representations of these purely infinite $C^*$-algebras.  For example, the purely atomic and permutative representations of the Cuntz algebras (introduced in \cite{dutkay-jorgensen-atomic} by the third named author, together with D. Dutkay and J. Haussermann) have deep connections to  invariant subspace theory.  Kawamura and his collaborators have also identified   applications of  permutative representations of Cuntz algebras to particle physics \cite{Kawa1, Kawa2, Kawa3, KHL}.
   Purely atomic and permutative representations of Cuntz algebras also appear frequently in connection with wavelets and Walsh bases \cite{dutkay-picioroaga-silvestrov, dutkay-picioroaga, LPT}, and in applications to quantum statistical mechanics as finitely correlated states \cite{BJKW, BJ-endo, Ohno, Matsui, FNW, FNW-quantum}.  
   
    Higher-rank graphs (also called $k$-graphs) were introduced by Kumjian and Pask in \cite{KP} in order to broaden the class of $C^*$-algebras which can be studied by the combinatorial methods that had proved so fruitful in the study of Cuntz--Krieger algebras and graph $C^*$-algebras (cf.~\cite{raeburn-szyman, drinen-tomforde, bhrs,hong-syman}). 
   Like their cousins the Cuntz and Cuntz--Krieger algebras, $k$-graph $C^*$-algebras admit both a graphical and a groupoid model, as well as a description in terms of generators and relations.  In addition to their importance to $C^*$-algebraic questions, such as Elliott's classification program \cite{prrs, ruiz-sims-sorensen}, recent research has uncovered applications of $k$-graph $C^*$-algebras and their representations
    in both pure and applied mathematics, ranging from  the study of spectral triples \cite{FGJKP1, FGJKP2} and of KMS states \cite{aHKR1, aHKR2, aHLRS2, aHLRS3}, in the pure end of the spectrum,  to a long and diverse list of other applications: branching laws for endomorphisms \cite{abe-kawamura, Kawa1, FGKP, gh-Per-Frob, GLR, FGJKP-SBFS},  subshifts \cite{pask-raeburn-weaver}, endomorphisms from measurable partitions \cite{bezuglyi-jorgensen-infinite, bratteli-jorgensen-price, Bra-Jor-Ostro}, Markov measures and topological Markov chains \cite{alpay-jorgensen-Lew, bezuglyi-jorgensen, dutkay-jorgensen-monic}, wavelets and multiresolutions \cite{AJP, FGKPexcursions, MP}, signal processing and filters, iterated function systems (IFS) and fractals \cite{BJ-permutative,BJ-filters, palle-iterated, jorgensen-adv-filters, dutkay-jorgensen-fractal, dutkay-jorgensen-iterated, hutchinson}, complex projective spaces, quasi-crystals, orbit equivalence and substitution dynamical systems  and tiling systems \cite{bezuglyi-karpel, bezuglyi-k-m}.

 Motivated by   the  applications indicated in the preceding paragraphs, this paper develops the theory of purely atomic and permutative representations for the $C^*$-algebras associated to row-finite source-free higher-rank graphs.  Compared with that of the Cuntz and Cuntz--Krieger algebras, the representation theory of $k$-graph $C^*$-algebras is still in its infancy.  Representations of $C^*$-algebras associated to $k$-graphs with a single vertex are investigated systematically in \cite{davidson-yang-representations, dav-pow-yan-atomic, davidson-power-yang-dilation, Yang-End}. Indeed, the representations labeled ``atomic'' by these authors are  similar, but not precisely equivalent, to what we call  ``permutative'' representations (see Section~\ref{sec:permutative_repn}).  Similar types of representations of free semigroup algebras, which include Cuntz algebras and Toeplitz algebras, have been studied in \cite{davidson-pitts-invariant-atomic} in terms of invariant subspaces. 
More recently, four of the authors of the present paper introduced the technology of $\Lambda$-semibranching function systems for finite $k$-graphs $\Lambda$ in \cite{FGKP}, which enable the construction of representations of $C^*(\Lambda)$ on Lebesgue measure spaces $L^2(X, \mu)$.  Further examples of such representations can be found in \cite{FGKPexcursions, FGJKP-SBFS, FGJKP-monic}; see also \cite{GLR} for $\Lambda$-semibranching function systems in a broader context.

  In this paper, in addition to developing the theory of purely atomic and permutative representations of row-finite higher-rank graphs with no sources, we analyze the relationship between these representations and  the monic representations studied in \cite{FGJKP-monic}.  Although the present paper was motivated by the analysis of atomic and permutative representations of Cuntz algebras carried out in \cite{dutkay-jorgensen-atomic}, we emphasize that due to the major structural differences between Cuntz algebras and $k$-graph $C^*$-algebras, 
  the  link between our work below and \cite{dutkay-jorgensen-atomic} is more conceptual than technical. For example, the  $S_i^*$-invariant subspaces, which were a central technical tool in \cite{dutkay-jorgensen-atomic}, have no clear analogue in the $k$-graph setting.  Indeed, we have been very pleasantly surprised by the number of results from  \cite{dutkay-jorgensen-atomic} which have analogues in the present setting of higher-rank graphs, given the technical divergence between the two papers.

This paper is organized as follows. We briefly review background material including higher-rank graphs $\Lambda$, their $C^*$-algebras $C^*(\Lambda)$ and the projection valued measures associated to the representations of $C^*(\Lambda)$ in Section~\ref{sec-fund-mater}.  Any  representation $\pi$ of  $C^*(\Lambda)$ in $B(\H)$  induces a projection valued measure $P$, defined on the infinite path space $\Lambda^\infty$ of $\Lambda$ and taking values in $\text{Proj}( \H)$; see Section \ref{sec:Proj-valued-measures} for details. These projection valued measures are a central tool in our study of purely atomic representations of $C^*(\Lambda)$, which we begin in  
Section~\ref{sec:atomic_repn}.  Central results from this section include our characterization of   purely atomic representations in terms of certain subsets   $\{\text{Orbit}(\omega)\}$ of the infinite path space of $\Lambda$ which are invariant under the canonical prefixing and coding maps $\sigma_\lambda, \sigma^n$ from Definition \ref{def:infinite-path}.  These invariant subsets correspond to invariant subspaces of the Hilbert space on which $C^*(\Lambda)$ is represented.  In particular, Proposition \ref{prop-atomic-1} shows that if an irreducible representation of $C^*(\Lambda)$ admits a single atom $\omega$, the representation is purely atomic.  We subsequently show in Theorem~\ref{thm-atomic-repres} that two purely atomic representations are unitarily equivalent if and only if the corresponding projection valued measures $P$ and $\tilde{P}$ have the same support, and the ranges of $P(\omega) $ and $\tilde{P}(\omega)$, for  each atom $\omega$,  have the same dimension. Proposition~\ref{prop:irr-rank1} identifies when two purely atomic representations are disjoint and when a purely atomic representation is  irreducible.  Finally, we give  necessary and sufficient conditions for a purely atomic representation to be a monic representation in Theorem~\ref{thm:atomic-1D}.

In Section~\ref{sec:permutative_repn}, we define a permutative representation of $C^*(\Lambda)$ (Definition~\ref{defpermutative}) and investigate the conditions under which a purely atomic representation is permutative (Theorem~\ref{thm-decomp-perm-repres}).  As already mentioned, permutative representations are related to the atomic representations of \cite{dav-pow-yan-atomic}, but Proposition \ref{prop:permutative-are-sbfs} shows that they can also be realized as $\Lambda$-semibranching representations associated to a countable discrete measure space.  In Theorem~\ref{thm-decomp-perm-repres} (the main theorem of the last section), we show that a purely atomic representation is permutative if it is supported on an orbit of aperiodic paths, and we show that the representation can be decomposed into a direct sum of permutative representations with injective encoding maps.

\subsection*{Acknowledgments}   	 

 E.G.~was partially supported by   the Deutsches Forschungsgemeinschaft via the SFB 878 ``Groups, Geometry, and Actions'' of the Westf\"alische-Wilhelms-Universit\"at M\"unster. 	 
S.K.~ was supported by Basic Science Research Program through the National Research Foundation of Korea (NRF) funded by the Ministry of Education (\#2017R1D1A1B03034697). 
C.F. and J.P. were partially supported by two individual grants
from the Simons Foundation (C.F. \#523991; J.P. \#316981).

\section{Background}
\label{sec-fund-mater}

\subsection{Higher-rank graphs}

 We will now describe briefly higher-rank graphs and their $C^*$-algebras, which were introduced by Kumjian and Pask in \cite{KP}.
 
Let $\N=\{0,1,2,\dots\}$ denote the set of natural numbers and let $k\in \N$ with $k\ge 1$. We write $e_1,\dots e_k$ for the standard basis vectors of $\N^k$, where $e_i$ is the vector of $\N^k$ with $1$ in the $i$-th position and $0$ everywhere else.  We view $\N^k$ as a category with one  object (namely 0) and with composition of morphisms given by addition.

Throughout this paper, for any category $\Lambda$, the notation $\lambda \in \Lambda$ will indicate that $\lambda$ is a morphism of $\Lambda$.  Note that this is consistent with the above description of $\N^k$ as a category and the standard use of the notation $n\in \N^k$.

A countable small category $\Lambda$ with a degree functor $d:\Lambda\to \N^k$ is a \emph{higher-rank graph} or \emph{$k$-graph} if it satisfies the \emph{factorization property}: for any morphism $\lambda\in\Lambda$ and any $m, n \in \N^k$ such that  $d(\lambda)=m+n \in \N^k$,  there exist unique morphisms $\mu,\nu\in\Lambda$ such that $\lambda=\mu\nu$ and $d(\mu)=m$, $d(\nu)=n$. 

We often regard $k$-graphs as a generalization of directed graphs, so we call morphisms $\lambda\in\Lambda$ \emph{paths} in $\Lambda$, and the objects (identity morphisms) are often called \emph{vertices}. For $n\in\N^k$, we write
\begin{equation}
\label{eq:Lambda-n}
\Lambda^n:=\{\lambda\in\Lambda\,:\, d(\lambda)=n\}\
\end{equation}
With this notation, note that $\Lambda^0$ is the set of objects (vertices) of $\Lambda$. Occasionally, we call elements of $\Lambda^{e_i}$ (for any $i$) \emph{edges}.

We write $r,s:\Lambda\to \Lambda^0$ for the range and source maps in $\Lambda$ respectively.  For   vertices $v,w \in \Lambda^0$, 
\[v\Lambda w:=\{\lambda\in\Lambda\,:\, r(\lambda)=v,\;s(\lambda)=w\}.\]
  Combining this notational convention with that of Equation \eqref{eq:Lambda-n} gives, e.g.,
\[ v\Lambda^n:= \{ \lambda \in \Lambda: r(\lambda) = v, \ d(\lambda) = n\}.\]

For $m,n\in\N^k$, we write $m\vee n$ for the coordinatewise maximum of $m$ and $n$. Given  $\lambda,\eta\in \Lambda$, we write
\begin{equation}\label{eq:lambda_min}
\Lambda^{\operatorname{min}}(\lambda,\eta):=\{(\alpha,\beta)\in\Lambda\times\Lambda\,:\, \lambda\alpha=\eta\beta,\; d(\lambda\alpha)=d(\lambda)\vee d(\eta)\}.
\end{equation}
If $k=1$, then $\Lambda^{\operatorname{min}}(\lambda, \eta)$ will have at most one element; this need not be true in a $k$-graph if $k > 1$.

We say that a $k$-graph $\Lambda$ is \emph{finite} if $\Lambda^n$ is a finite set for all $n\in\N^k$ and say that $\Lambda$  \emph{has no sources} or \emph{is source-free} if $v\Lambda^n\ne \emptyset$ for all $v\in\Lambda^0$ and $n\in\N^k$. It is well known that this is equivalent to the condition that $v\Lambda^{e_i}\ne \emptyset$ for all $v\in \Lambda$ and all basis vectors $e_i$ of $\N^k$. 
We say that $\Lambda$ is \emph{row-finite} if $|v\Lambda^n| < \infty$ for all $v\in\Lambda^0$ and $n\in\N^k$. We say that a $k$-graph is \emph{strongly connected} if, for all $v,w\in\Lambda^0$, $v\Lambda w\ne \emptyset$.

To describe the infinite path space $\Lambda^\infty$ of a $k$-graph, we introduce the fundamental example $\Omega_k$ as follows.
For $k\ge 1$, let $\Omega_k$ be the small category with
\[
\operatorname{Obj}(\Omega_k)=\N^k,\quad \text{and}\quad \operatorname{Mor}(\Omega_k)=\{(p,q)\in \N^k\times \N^k\,:\, p\le q\}.
\]
Again, we can also view elements of $\text{Obj}(\Omega_k)$ as identity morphisms, via the map $\text{Obj}(\Omega_k) \ni p \mapsto (p, p) \in \text{Mor}(\Omega_k)$.
The range and source maps $r,s:\operatorname{Mor}(\Omega_k)\to \operatorname{Obj}(\Omega_k)$ are given by $r(p,q)=p$ and $s(p,q)=q$. If we define $d:\Omega_k\to \N^k$ by $d(p,q)=q-p$, then one can check that $\Omega_k$ is a $k$-graph with degree functor $d$.

\begin{defn}[\cite{KP} Definitions 2.1]
\label{def:infinite-path}
Let $\Lambda$ be a $k$-graph. An \emph{infinite path} in $\Lambda$ is a $k$-graph morphism (degree-preserving functor) $x:\Omega_k\to \Lambda$, and we write $\Lambda^\infty$ for the set of infinite paths in $\Lambda$. 
Since $\Omega_k$ has a terminal object (namely $0 \in\N^k$) but no initial object, we think of our infinite paths as having a range $r(x) : = x(0)$ but no source.
For each $m\in \N^k$, we have a shift map $\sigma^m:\Lambda^\infty \to \Lambda^\infty$ given by
\begin{equation}\label{eq:shift-map}
\sigma^m(x)(p,q)=x(p+m,q+m)
\end{equation}
for $x\in\Lambda^\infty$ and $(p,q)\in\Omega_k$. {We also have a partially defined ``prefixing map'' $\sigma_\lambda: s(\lambda) \Lambda^\infty  \to \Lambda^\infty$ for each $\lambda \in \Lambda$:
\begin{equation} \sigma_\lambda(x) = \lambda x = \left[ (p, q) \mapsto \begin{cases} \lambda(p, q), & q \leq d(\lambda) \\
x(p-d(\lambda), q-d(\lambda)), & p \geq d(\lambda) \\
\lambda (p, d(\lambda)) x(0, q-d(\lambda)), & p < d(\lambda) < q
\end{cases} \right]
\label{eq:prefixing}
\end{equation}}
Observe that, since $x: \Omega_k \to \Lambda$ is degree preserving and $\Omega_k$ contains infinitely many edges of each degree $e_i\in \N^k$, the same must be true of any infinite path $x \in \lambda^\infty$.
\end{defn}

\begin{rmk}
\label{rmk:colored-graph-perspective-paths}
We now describe an alternative perspective on higher-rank graphs,   which motivates their description as generalizations of directed graphs.  Let $G$ be an edge-colored directed graph with $k$ different colors of edges, and define an additive map $d: \text{Path}(G) \to \N^k$ by sending an edge $e$ of color $i, \ 1 \leq i \leq k$, to $d(e) = e_i \in \N^k$.  For the rest of this discussion, let ``red'' and ``blue'' denote different but arbitrary colors appearing in $G$.  

Suppose that, for every pair of vertices $v, w \in G^0$, the number of paths from $v$ to $w$ which consist of a red edge followed by a blue edge is the same as the number of paths from $v$ to $w$ consisting of a blue edge followed by a red edge.  Then we can ``pair up'' these paths and define $\Lambda_G$ to be the quotient of $G$ which arises from identifying these commuting squares.
\[\begin{tikzpicture}[scale = 1.5]
			\node (W) at (0,0){$w$};
			\node (V2) [V]  at (0,1){};
			\node (V1) [V] at (1,0){};
			\node (V) at (1,1){$v$};
			\node at (0.5, 0.5) {$\sim$};
		\begin{scope}[->,  line width=0.7, color = red]
			\draw  (V1) to (W);
			\draw (V) to (V2);
		\end{scope}
		\begin{scope}[->, dashed, line width = 0.7, color = blue] 
			\draw (V) to (V1);
			\draw (V2) to (W);
		\end{scope}
	\end{tikzpicture} \]
Hazlewood et al.~proved in \cite{HRSW} that $\Lambda_G$ is in fact a $k$-graph, as long as every path in $G$ consisting of 3 edges of different colors determines a well-defined ``commuting cube'' in $\Lambda_G$.

Thus, an edge-colored directed graph with $k$ different colors of edges, and the same number of blue-red paths as red-blue paths between each pair of vertices, gives rise to a $k$-graph once we specify the pairing between the red-blue and blue-red paths.  By abuse of notation we will occasionally use the phrase ``factorization rule'' to denote this pairing.  

In this perspective, one can view each infinite path $x\in \Lambda_G^\infty$ as the equivalence class of a composable sequence of edges $x = f_1 f_2 \cdots$ in $G$, such that each of the $k$ possible  edge colors occurs infinitely often.
\end{rmk}

It is well-known that the collection of cylinder sets 
\[
Z(\lambda)=\{x\in\Lambda^\infty\,:\, x(0,d(\lambda))=\lambda\},
\]
for $\lambda \in \Lambda$, form a compact open basis for a locally compact Hausdorff topology on $\Lambda^\infty$, under  reasonable hypotheses on $\Lambda$ (in particular, when $\Lambda$ is row-finite: see Section 2 of \cite{KP}). If a $k$-graph $\Lambda$ is  finite, then $\Lambda^\infty$ is compact in this topology. 

%
%
%
%

Now we introduce the $C^*$-algebra associated to a $k$-graph $\Lambda$. Here we only consider row-finite $k$-graphs with no sources. For $C^*$-algebras associated to more general $k$-graphs, see for example  \cite{RSY2, FMY}.
\begin{defn}\label{def:kgraph-algebra}
Let $\Lambda$ be a row-finite $k$-graph with no sources. A \emph{Cuntz--Krieger $\Lambda$-family} is a collection  $\{t_\lambda:\lambda\in\Lambda\}$ of partial isometries in a $C^*$-algebra satisfying
\begin{itemize}
\item[(CK1)] $\{t_v\,:\, v\in\Lambda^0\}$ is a family of mutually orthogonal projections,
\item[(CK2)] $t_\lambda t_\eta=t_{\lambda\eta}$ if $s(\lambda)=r(\eta)$,
\item[(CK3)] $t^*_\lambda t_\lambda=t_{s(\lambda)}$ for all $\lambda\in\Lambda$,
\item[(CK4)] for all $v\in\Lambda$ and $n\in\N^k$, we have
\[
t_v=\sum_{\lambda\in v\Lambda^n} t_\lambda t^*_\lambda.
\]
\end{itemize}
The Cuntz--Krieger $C^*$-algebra $C^*(\Lambda)$ associated to $\Lambda$ is the universal $C^*$-algebra generated by a Cuntz--Krieger $\Lambda$-family.
\end{defn}
Thus, every Cuntz--Krieger $\Lambda$-family induces a representation of $C^*(\Lambda)$; we will often use the same notation, $\{ t_\lambda\}_{\lambda\in \Lambda}$, for the representation as for its Cuntz--Krieger $\Lambda$ family.

One can show that 
\[
C^*(\Lambda)=\overline{\operatorname{span}}\{s_\alpha s^*_\beta\,:\, \alpha,\beta\in\Lambda,\; s(\alpha)=s(\beta)\}.
\]
Indeed, (CK4) implies that for all $\lambda, \eta\in \Lambda$, we have
\begin{equation}\label{eq:CK4-2}
s_\lambda^* s_\eta=\sum_{(\alpha,\beta)\in \Lambda^{\operatorname{min}}(\lambda,\eta)} s_\alpha s^*_\beta. 
\end{equation}

Recall from Definition 2.7 of \cite{KP} that for any row-finite source-free $k$-graph $\Lambda$, we have a groupoid 
\[ \mathcal G_\Lambda =\{ (x, m-n, y) \in \Lambda^\infty \times \Z^k \times \Lambda^\infty: \sigma^m(x) = \sigma^n(y)\}\]
such that $C^*(\Lambda) \cong C^*(\mathcal G_\Lambda)$.  The groupoid $\mathcal G_\Lambda$ is \'etale \cite[Proposition 2.8]{KP} and amenable \cite[Theorem 5.5]{KP}. (See also \cite{FMY} for groupoid models for more general $k$-graphs and their $C^*$-algebras).
 \begin{defn} 
 \label{deforbit}
 Let $\Lambda$ be a row-finite $k$-graph with no sources, and let $\Lambda^{\infty}$ be the infinite path space of $\Lambda$, and $\mathcal{G}_\Lambda$ be the associated groupoid given above.
 For any $\omega\in \Lambda^{\infty},$ we set 
 $$\text{Orbit}(\omega)\;=\{\gamma\in\Lambda^{\infty}:(\gamma,n,\omega)\in {\mathcal G}_{\Lambda}\;\text{for some}\;n\in \mathbb Z^k\},$$
 i.e. $\gamma\in \text{Orbit}(\omega)$ if and only if there exist $m,\;\ell\in\mathbb N^k$ such that $\sigma^m(\gamma)\;=\sigma^{\ell}(\omega).$
 \end{defn}
 We note that $\text{Orbit}(\omega)$ is invariant under $\sigma^n$ (defined in Equation \eqref{eq:shift-map}) and $(\sigma^n)^{-1},$ for all $n\in \mathbb N^k.$ 
 {Note also that each orbit is a   Borel set, being a countable union of points (which are countable intersections of cylinder sets).}

\subsection{Projection valued measures}
\label{sec:Proj-valued-measures}

Inspired by Dutkay, Haussermann, and Jorgensen     \cite{dutkay-jorgensen-monic, dutkay-jorgensen-atomic}, here we review the projection valued measure associated to a  representation of $C^*(\Lambda)$, which was developed in \cite{FGJKP-monic}. 

Let $\Lambda$ be a row-finite $k$-graph with no sources.
Given a representation $\{ t_\lambda\}_{\lambda\in\Lambda }$ of a $k$-graph $C^*$-algebra $C^*(\Lambda)$ on a Hilbert space $\mathcal{H}$, we define a projection valued function $P$ on $\Lambda^\infty$ by 
 \begin{equation}\label{eq:pv-measure}
 P(Z(\lambda)) = t_\lambda t_\lambda^* \quad\text{for all $\lambda \in\Lambda$}.
 \end{equation}
 
According to Proposition~3.9 of \cite{FGJKP-monic}, the above function $P$ can be extended to a projection valued measure on the Borel $\sigma$-algebra $\mathcal{B}_o(\Lambda^\infty)$ generated by the cylinder sets.  The proof relies on the Kolmogorov extension theorem. 
 
 \begin{thm} \cite[Proposition~3.9]{FGJKP-monic}
 \label{conj-palle-proj-valued-measure-gen-case}
 Let $\Lambda$ be a row-finite $k$-graph with no sources.
 Given a representation $\{ t_\lambda\}_{\lambda\in\Lambda}$ of a $k$-graph $C^*$-algebra $C^*(\Lambda)$ on a Hilbert space $\mathcal{H}$, then the assignment 
 \[P(Z(\lambda)) = t_\lambda t_\lambda^*  \quad\text{for}\quad  \lambda \in \Lambda
 \]
 extends to  a projection valued measure on the Borel $\sigma$-algebra  $\mathcal{B}_o(\Lambda^\infty)$ generated by the cylinder sets  of the infinite path space $\Lambda^\infty$. 
 \end{thm}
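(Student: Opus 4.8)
The plan is to construct $P$ one matrix coefficient at a time, using the Cuntz--Krieger relations to produce a consistent family of scalar measures and then reassembling them with the Riesz representation theorem. First I would fix $\xi \in \H$ and define a nonnegative, finitely additive set function on cylinder sets by $\mu_\xi(Z(\lambda)) = \langle t_\lambda t_\lambda^* \xi, \xi\rangle = \|t_\lambda^* \xi\|^2$. The factorization property gives, for $n \geq d(\lambda)$, the disjoint decomposition $Z(\lambda) = \bigsqcup_{\mu \in s(\lambda)\Lambda^{n - d(\lambda)}} Z(\lambda\mu)$, and relations (CK2)--(CK4) give the matching operator identity $t_\lambda t_\lambda^* = \sum_{\mu \in s(\lambda)\Lambda^{n-d(\lambda)}} t_{\lambda\mu} t_{\lambda\mu}^*$ (write $t_\lambda t_\lambda^* = t_\lambda t_{s(\lambda)} t_\lambda^*$ and insert (CK4)). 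Thus the numbers $\mu_\xi(Z(\lambda))$ are precisely the marginals, over the directed set $\N^k$, of the discrete measures $\lambda \mapsto \|t_\lambda^* \xi\|^2$ on each $\Lambda^n$, and this computation shows they are consistent under the initial-segment maps $\Lambda^n \to \Lambda^m$. Consistency is exactly the hypothesis the Kolmogorov extension theorem requires, so it yields a Borel measure $\mu_\xi$ on $\mathcal{B}_o(\Lambda^\infty)$.

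The step I expect to be the main obstacle is the passage from finite to countable additivity, equivalently the regularity needed to make the Kolmogorov extension valid. Here the row-finiteness of $\Lambda$ is essential: it guarantees that each cylinder set $Z(\lambda)$ is compact and open. Any partition of $Z(\lambda)$ into countably many cylinder sets is then, by compactness together with disjointness and openness, necessarily finite, so countable additivity on the algebra generated by the cylinder sets collapses to the finite additivity already supplied by (CK4). This is the single place where the topological hypotheses on $\Lambda$ genuinely enter, and without them the premeasure need not extend.

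With the family $\{\mu_\xi\}_{\xi \in \H}$ in hand, I would polarize to obtain complex measures $\mu_{\xi,\eta}(A) = \frac14 \sum_{j=0}^3 i^j \, \mu_{\xi + i^j \eta}(A)$, which on a cylinder set satisfy $\mu_{\xi,\eta}(Z(\lambda)) = \langle t_\lambda t_\lambda^* \xi, \eta\rangle$. For each fixed Borel set $A$ the assignment $(\xi,\eta) \mapsto \mu_{\xi,\eta}(A)$ is a bounded sesquilinear form, since the uniform bound $\|t_\lambda t_\lambda^*\| \leq 1$ on cylinder sets propagates to the estimate $|\mu_{\xi,\eta}(A)| \leq \|\xi\|\,\|\eta\|$ through the extension. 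The Riesz representation theorem then yields a unique bounded operator $P(A)$ with $\langle P(A)\xi,\eta\rangle = \mu_{\xi,\eta}(A)$. Self-adjointness of $P(A)$ is immediate from $\mu_{\xi,\eta}(A) = \overline{\mu_{\eta,\xi}(A)}$, and countable additivity of $P$ in the strong operator topology transfers directly from that of the scalar measures $\mu_\xi$.

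It remains to check multiplicativity, $P(A \cap B) = P(A) P(B)$, which in particular forces each $P(A)$ to be a projection. I would verify this first on cylinder sets using \eqref{eq:CK4-2}: since $Z(\lambda) \cap Z(\eta) = \bigsqcup_{(\alpha,\beta) \in \Lambda^{\operatorname{min}}(\lambda,\eta)} Z(\lambda\alpha)$, one computes $t_\lambda t_\lambda^* t_\eta t_\eta^* = t_\lambda\big(\sum_{(\alpha,\beta)} t_\alpha t_\beta^*\big) t_\eta^* = \sum_{(\alpha,\beta)} t_{\lambda\alpha} t_{\lambda\alpha}^*$, using $\eta\beta = \lambda\alpha$ on $\Lambda^{\operatorname{min}}(\lambda,\eta)$; this is exactly $P(Z(\lambda) \cap Z(\eta))$. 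A standard monotone-class (or $\pi$--$\lambda$) argument then upgrades multiplicativity from the cylinder sets, which are closed under the relevant intersections, to all of $\mathcal{B}_o(\Lambda^\infty)$. Finally, $P(\Lambda^\infty) = I$ follows from $\mu_\xi(\Lambda^\infty) = \sum_{v \in \Lambda^0} \langle t_v \xi, \xi\rangle = \|\xi\|^2$ under the (standard) assumption that the representation is nondegenerate, so that $\sum_{v \in \Lambda^0} t_v$ converges strongly to the identity.
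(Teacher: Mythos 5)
Your proposal is correct and follows essentially the same route as the paper, which defers the proof to \cite[Proposition~3.9]{FGJKP-monic} but explicitly notes that it ``relies on the Kolmogorov extension theorem'' --- precisely the consistency-of-marginals argument at the heart of your construction, with the compactness of the cylinder sets $Z(\lambda)$ (guaranteed by row-finiteness) supplying countable additivity, followed by polarization and the verification of multiplicativity via the Cuntz--Krieger relations. The details you gloss over (sesquilinearity of the polarized extension on general Borel sets, and treating each compact $Z(v)$ separately when $\Lambda^0$ is infinite) are standard and do not constitute gaps.
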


We now summarize some properties of the projection valued measure $P$ on $\Lambda^\infty$ established in \cite{FGJKP-monic}.

 \begin{prop} \cite[Proposition~2.13]{FGJKP-monic}
 \label{prop-atomic-basic-equns} 
 Let $\Lambda$ be a row-finite, source-free $k$-graph, and fix a representation $\{t_\lambda:\lambda \in \Lambda\}$ of $C^*(\Lambda)$.
 Then: 
 \begin{itemize}\label{prop:pvm-properties}
 \item[(a)] For $\lambda, \eta \in\Lambda$ with $s(\lambda)=r(\eta)$, we have $t_\lambda P(Z(\eta)) t_\lambda^*=P(\sigma_\lambda(Z(\eta)))$, where $\sigma_\lambda$ is the {prefixing} 
  map on $\Lambda^\infty$ given in Equation \eqref{eq:prefixing}.
 
 \item[(b)] For any fixed $n \in \N^k$, we have
  \[
  \sum_{\lambda \in  f(\eta) \Lambda^n} t_\lambda P(\sigma_\lambda^{-1}(Z(\eta))) t_\lambda^* = P(Z(\eta)) ;
  \]
  \item[(c)]  For any $\lambda,\eta \in \Lambda$ with $r(\lambda)=r(\eta)$, we have $t_\lambda P(\sigma_\lambda^{-1}(Z(\eta))) = P(Z(\eta)) t_\lambda$;
  \item[(d)] When $\lambda\in\Lambda^n$, we have $t_\lambda P( Z(\eta)) =  P((\sigma^n)^{-1}(Z(\eta))) t_\lambda$, where $\sigma^n$ is defined in Equation \eqref{eq:shift-map}.
 \end{itemize}
 \end{prop}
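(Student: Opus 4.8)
The plan is to verify each of the four identities (a)--(d) by working on cylinder sets, where $P$ is given concretely by $P(Z(\eta)) = t_\eta t_\eta^*$, and then extending by the measure-theoretic properties of $P$ where necessary. The guiding principle throughout is to translate the set-theoretic operations $\sigma_\lambda$, $\sigma_\lambda^{-1}$, and $(\sigma^n)^{-1}$ applied to cylinder sets into the corresponding algebraic manipulations of the partial isometries $t_\lambda$, using the Cuntz--Krieger relations (CK2)--(CK4) and Equation~\eqref{eq:CK4-2}.

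For part (a), I would first check that $\sigma_\lambda(Z(\eta)) = Z(\lambda\eta)$ as subsets of $\Lambda^\infty$ when $s(\lambda) = r(\eta)$, which follows directly from the definition of the prefixing map in Equation~\eqref{eq:prefixing}. Then $P(\sigma_\lambda(Z(\eta))) = P(Z(\lambda\eta)) = t_{\lambda\eta} t_{\lambda\eta}^*$, and by (CK2) this equals $t_\lambda t_\eta t_\eta^* t_\lambda^* = t_\lambda P(Z(\eta)) t_\lambda^*$, giving the claim. For part (b), the key observation is that $\sigma_\lambda^{-1}(Z(\eta))$ is the preimage under prefixing, and the sum over $\lambda \in s(\eta)\Lambda^n$ (I read the ``$f(\eta)$'' as $s(\eta)$) should telescope via (CK4). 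I would compute $t_\lambda P(\sigma_\lambda^{-1}(Z(\eta))) t_\lambda^*$ on cylinders, recognize the sum $\sum_{\lambda} t_\lambda t_\lambda^*$ weighted appropriately, and use (CK4) applied to the range projection of $\eta$ to recover $t_\eta t_\eta^* = P(Z(\eta))$. Part (c) is the intertwining relation: I expect it to follow by comparing $t_\lambda P(\sigma_\lambda^{-1}(Z(\eta)))$ and $P(Z(\eta)) t_\lambda$ on cylinder sets, again reducing to (CK2) and (CK3) together with the $\Lambda^{\min}$ bookkeeping from Equation~\eqref{eq:CK4-2}. Part (d) is the analogous statement for the shift map $\sigma^n$ rather than prefixing, and I would handle it by writing $\lambda \in \Lambda^n$, using the factorization property to relate $(\sigma^n)^{-1}(Z(\eta))$ to cylinder sets prefixed by degree-$n$ paths, and then invoking (c) or a direct computation.

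The main technical obstacle will be part (b), and to a lesser extent (c) and (d): these involve the \emph{preimages} $\sigma_\lambda^{-1}$ and $(\sigma^n)^{-1}$, which in general are not themselves single cylinder sets but rather unions or more complicated Borel sets, so the verification cannot be done purely on the generating cylinders without care. Concretely, $\sigma_\lambda^{-1}(Z(\eta))$ depends on how $d(\lambda)$ and $d(\eta)$ compare coordinatewise, and untangling the three cases in Equation~\eqref{eq:prefixing} to identify $\sigma_\lambda^{-1}(Z(\eta))$ explicitly as a cylinder set (or finite union thereof) is the delicate combinatorial heart of the argument. Once these preimages are correctly identified, the identities should reduce to algebra. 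Because $P$ is a genuine projection valued measure by Theorem~\ref{conj-palle-proj-valued-measure-gen-case}, I can use its countable additivity and multiplicativity on Borel sets to pass from the cylinder-set computations to the stated identities; this lets me avoid re-deriving $\sigma$-additivity and instead focus attention on getting the set-theoretic identities for the prefixing and shift preimages exactly right.

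Since the statement is quoted as \cite[Proposition~2.13]{FGJKP-monic}, I would in practice cite the companion paper for the full details, but the self-contained argument above---cylinder-set reduction plus Cuntz--Krieger relations, with the preimage identification in (b) as the crux---is the natural route.
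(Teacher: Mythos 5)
The paper does not actually prove this proposition: it is quoted verbatim from the companion paper \cite[Proposition~2.13]{FGJKP-monic}, so there is no in-text argument to compare against. Your outline --- identify $\sigma_\lambda(Z(\eta))$, $\sigma_\lambda^{-1}(Z(\eta))$ and $(\sigma^n)^{-1}(Z(\eta))$ as (disjoint unions of) cylinder sets and then reduce everything to the Cuntz--Krieger relations and Equation~\eqref{eq:CK4-2} --- is exactly the standard route, and your treatment of (a) is complete as written. For the preimages, the identifications you need are $\sigma_\lambda^{-1}(Z(\eta)) = \bigsqcup_{(\alpha,\beta)\in\Lambda^{\min}(\lambda,\eta)} Z(\alpha)$ (a finite disjoint union, by row-finiteness and the factorization property) and $(\sigma^n)^{-1}(Z(\eta)) = \bigsqcup_{\mu} Z(\mu\eta)$, the union over $\mu\in\Lambda^n$ with $s(\mu)=r(\eta)$ (possibly countably infinite, but countable additivity of $P$ in the strong operator topology handles this). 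With these in hand, (c) and (d) follow from $t_\lambda^* t_\eta = \sum_{(\alpha,\beta)} t_\alpha t_\beta^*$ and from $t_\mu^* t_\lambda = \delta_{\mu,\lambda} t_{s(\lambda)}$ when $d(\mu)=d(\lambda)$, just as you predict.

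The one genuine error is your reading of the (admittedly garbled) index set in (b): it must be $r(\eta)\Lambda^n$, not $s(\eta)\Lambda^n$. The map $\sigma_\lambda$ has image inside $Z(\lambda)\subseteq r(\lambda)\Lambda^\infty$, so $\sigma_\lambda^{-1}(Z(\eta))$ is empty unless $r(\lambda)=r(\eta)$; if $\lambda$ ranged over $s(\eta)\Lambda^n$ then $r(\lambda)=s(\eta)$, and the left-hand side of (b) would vanish whenever $s(\eta)\ne r(\eta)$, while the right-hand side is $t_\eta t_\eta^*$. With the correct index set, (b) is exactly the decomposition $Z(\eta)=\bigsqcup_{\lambda\in r(\eta)\Lambda^n}\bigl(Z(\lambda)\cap Z(\eta)\bigr)$ combined with the identity $t_\lambda P(\sigma_\lambda^{-1}(Z(\eta)))t_\lambda^* = P(Z(\lambda)\cap Z(\eta))$, which follows from part (a) applied to each $Z(\alpha)$ in the preimage; note that this is also consistent with your own remark about applying (CK4) at the \emph{range} vertex of $\eta$, which sits in tension with your stated choice of $s(\eta)$. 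The rest of your plan goes through unchanged.
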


\section{Purely atomic representations of $C^*(\Lambda)$}
\label{sec:atomic_repn}

In this section, we define purely atomic representations of $C^*(\Lambda)$ in terms of the projection valued measure being purely atomic (cf. Definition 4.1 of \cite{dutkay-jorgensen-atomic}).
While many of the results in this section were inspired by similar results established in \cite{dutkay-jorgensen-atomic} for Cuntz algebras, our proofs  in the setting of higher-rank graphs have required new techniques.

 \begin{defn} 
 \label{defatomic}
(c.f. Definition 4.1 of \cite{dutkay-jorgensen-atomic}.) 
Let $\Lambda$ be a row-finite $k$-graph with no sources.
A representation $\{ t_\lambda\}_{\lambda \in \Lambda}$ of  $C^*(\Lambda)$ on a Hilbert space ${\mathcal H}$ is called \emph{purely atomic}  if there exists a Borel subset $\Omega\subset\Lambda^{\infty}$ such that the projection valued measure $P$ defined on the Borel sets of $\Lambda^{\infty}$ as in Theorem~\ref{conj-palle-proj-valued-measure-gen-case} satisfies
\begin{enumerate}
 \item[(a)] $P(\Lambda^{\infty}\backslash \Omega)\;\;=\; 0_{\mathcal H},$
 \item[(b)] $P(\{\omega\})\not=0_{\mathcal H}$ for all $\omega\in \Omega$,
 \item[(c)] $\bigoplus_{\omega\in \Omega}P(\{\omega\})=\;\text{Id}_{\mathcal H},$
 \end{enumerate}
 where the sum on the left-hand side of (c) converges in the strong operator topology.
 \end{defn}
 Thus, a representation of $C^*(\Lambda)$ is purely atomic if the corresponding projective-valued measure is purely atomic on the Borel $\sigma$-algebra ${\mathcal B}_o(\Lambda^{\infty})$ of $\Lambda^\infty$.

\begin{example}
Consider the $2$-graph $\Lambda$ associated to the edge-colored directed graph
\[
\begin{tikzpicture}[scale=1.5]
 \node[inner sep=0.5pt, circle] (u) at (0,0) {$u$};
    \node[inner sep=0.5pt, circle] (v) at (1.5,0) {$v$};
    \draw[-latex, thick, blue] (v) edge [out=50, in=-50, loop, min distance=30, looseness=2.5] (v);
    \draw[-latex, thick, blue] (u) edge [out=130, in=230, loop, min distance=30, looseness=2.5] (u);
\draw[-latex, thick, red, dashed] (v) edge [out=150, in=30] (u);
\draw[-latex, thick, red, dashed] (u) edge [out=-30, in=210] (v);
\node at (-0.75, 0) {\color{black} $e$}; 
\node at (0.7, 0.45) {\color{black} $h$};
\node at (0.7, -0.45) {\color{black} $g$};
\node at (2.25, 0) {\color{black} $f$};
\end{tikzpicture}
\]
whose factorization rules are given by $eh=hf$ and $fg=ge$. With these factorization rules, there is only one infinite path $x\in u\Lambda^\infty$, namely
\[
x=ehfgehfg\dots = hfgehfge\dots = hgeehgee\dots.
\]
Similarly, one can see that there is only one infinite path $y\in v\Lambda^\infty$, namely
\[
y=fgehfgeh\dots = gehfgehf \dots = ghffghff \dots .
\]
In other words, $\Lambda^\infty \{x, y\}$.  Since $\Lambda^\infty$ is finite,   any nontrivial representation of $C^*(\Lambda)$ with the associated projection-valued measure $P$ and $\Omega$ satisfies the conditions (a), (b) and (c) of Definition~\ref{defatomic}, and hence it must be purely atomic. 
\end{example}

\subsection{Characterizations of purely atomic representations} 
 
To arrive at Theorem \ref{thm-atomic-repres}, which characterizes unitarily equivalent purely atomic representations as having projection valued measure with equal support and ranges,  we start with the following proposition. 
 
\begin{prop}
\label{prop:atomic-2}  Let $\Lambda$ be a row-finite $k$-graph with no sources, and let $\{t_\lambda\}_{\lambda \in \Lambda}$ generate a purely atomic representation of $C^*(\Lambda).$  Let $P$ be the associated projection valued measure on the Borel subsets of the infinite path space $\Lambda^{\infty}.$ Then we have the following.
\begin{itemize}
\item[(a)] For $\lambda\in\Lambda$ and $\omega\in Z(s(\lambda))\subset\; \Lambda^{\infty}$, we have 
\[
t_{\lambda}P(\{\omega\})t_{\lambda}^*\;=\;P(\{\lambda\omega\}),\]
and for $n\in \mathbb N^k$, we have 
\[
t_{\omega(0,n)}^*P(\{\omega\})t_{\omega(0,n)}\;=\;P(\{\sigma^n(\omega)\}).
\]
\item[(b)] For $\eta\in\Lambda^n$ and $\omega\in \Lambda^\infty$ with $\eta\not=\omega(0,n)$, we have
\[
t_{\eta}^*P(\{\omega\})t_{\eta}=0.
\]
\end{itemize}
\end{prop}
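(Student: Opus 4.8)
The plan is to reduce every statement about the singleton atoms $\{\omega\}$ to the corresponding statement about cylinder sets, for which the Cuntz--Krieger relations and the algebraic identities of Proposition~\ref{prop-atomic-basic-equns} are directly available. The bridge is the observation that for any $\omega \in \Lambda^\infty$ the singleton decomposes as a decreasing intersection of cylinder sets, $\{\omega\} = \bigcap_{n \in \N^k} Z(\omega(0,n))$; restricting to the cofinal sequence $n = (j,\dots,j)$ and using countable additivity of the projection valued measure $P$, one gets $P(Z(\omega(0,n))) \to P(\{\omega\})$ in the strong operator topology. Since left and right multiplication by a fixed bounded operator are strong-operator continuous, this lets us pass the cylinder-set identities to the atoms.

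For the first identity in (a) I would first establish its cylinder version directly from the Cuntz--Krieger relations. Since $\omega \in Z(s(\lambda))$ forces $r(\omega(0,n)) = s(\lambda)$, relation (CK2) gives $t_{\lambda\,\omega(0,n)} = t_\lambda t_{\omega(0,n)}$, so
\[P(Z(\lambda\,\omega(0,n))) = t_{\lambda\,\omega(0,n)} t_{\lambda\,\omega(0,n)}^* = t_\lambda \big(t_{\omega(0,n)} t_{\omega(0,n)}^*\big) t_\lambda^* = t_\lambda\, P(Z(\omega(0,n)))\, t_\lambda^*.\]
Now $\lambda\,\omega(0,n) = (\lambda\omega)(0,\,d(\lambda)+n)$, so as $n \to \infty$ the left-hand sets decrease to $\{\lambda\omega\}$ while the right-hand sets decrease to $\{\omega\}$; taking strong-operator limits on both sides yields $t_\lambda P(\{\omega\}) t_\lambda^* = P(\{\lambda\omega\})$. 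For the second identity in (a) I would apply the first one with $\lambda$ replaced by $\omega(0,n)$ and $\omega$ replaced by $\sigma^n(\omega)$ (noting $r(\sigma^n(\omega)) = \omega(n) = s(\omega(0,n))$ and $\omega(0,n)\,\sigma^n(\omega) = \omega$), obtaining $t_{\omega(0,n)}\, P(\{\sigma^n(\omega)\})\, t_{\omega(0,n)}^* = P(\{\omega\})$. Multiplying on the left by $t_{\omega(0,n)}^*$ and on the right by $t_{\omega(0,n)}$, and using (CK3) in the form $t_{\omega(0,n)}^* t_{\omega(0,n)} = t_{\omega(n)} = P(Z(\omega(n)))$, together with the fact that $P(Z(\omega(n)))$ fixes $P(\{\sigma^n(\omega)\})$ (because $\sigma^n(\omega) \in Z(\omega(n))$ and $P$ is multiplicative on intersections), gives exactly $t_{\omega(0,n)}^* P(\{\omega\}) t_{\omega(0,n)} = P(\{\sigma^n(\omega)\})$.

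For part (b) I would use monotonicity of $P$ instead of a limit. Set $\mu = \omega(0,n)$; since $\{\omega\} \subseteq Z(\mu)$ we have $0 \le t_\eta^* P(\{\omega\}) t_\eta \le t_\eta^* P(Z(\mu)) t_\eta$ as positive operators, so it suffices to show the larger operator vanishes. Writing $P(Z(\mu)) = t_\mu t_\mu^*$ gives $t_\eta^* P(Z(\mu)) t_\eta = (t_\mu^* t_\eta)^*(t_\mu^* t_\eta)$, so the claim reduces to $t_\mu^* t_\eta = 0$ for distinct $\mu,\eta \in \Lambda^n$. This follows from \eqref{eq:CK4-2}: since $d(\mu) = d(\eta) = n$, any $(\alpha,\beta) \in \Lambda^{\operatorname{min}}(\mu,\eta)$ must satisfy $d(\alpha) = d(\beta) = 0$, forcing $\mu = \mu\alpha = \eta\beta = \eta$, contrary to $\mu \neq \eta$; hence $\Lambda^{\operatorname{min}}(\mu,\eta) = \emptyset$ and $t_\mu^* t_\eta = 0$.

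The only genuinely delicate point is the interchange of the strong-operator limits with multiplication in part (a), together with the verification that the relevant cylinder sets decrease to the intended singletons; once the limit argument is set up cleanly, the rest is a direct application of the Cuntz--Krieger relations and of the monotonicity and multiplicativity of $P$. I expect the degree bookkeeping (confirming $r(\omega(0,n)) = s(\lambda)$, that $\omega(0,n)\,\sigma^n(\omega) = \omega$, and that $s(\omega(0,n)) = r(\sigma^n(\omega)) = \omega(n)$) to be routine but worth stating explicitly.
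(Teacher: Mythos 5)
Your proposal is correct and follows essentially the same route as the paper: the paper's (very terse) proof likewise obtains the identities by taking strong-operator limits over a nested sequence of cylinder sets decreasing to $\{\omega\}$, using the cylinder-level identities from Proposition~\ref{prop-atomic-basic-equns}. Your write-up simply fills in the details (and handles (b) by monotonicity of $P$ together with $t_\mu^* t_\eta = 0$ for distinct $\mu,\eta$ of equal degree, which is a clean way to make the paper's sketch precise).
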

\begin{proof}
These statements follow from taking the limit in the strong operator topology on a nested sequence of cylinder sets decreasing to $\{\omega\}$ and using Proposition~\ref{prop-atomic-basic-equns}(a).
\end{proof}

 The following Corollary is an immediate consequence of Proposition \ref{prop:atomic-2} and our observations above.

 \begin{cor} 
 Let $\Lambda$ be a row-finite, source-free $k$-graph and let $\{ t_\lambda\}_{\lambda \in \Lambda}$ be a representation of $C^*(\Lambda)$. 
 \begin{itemize}
\item[(a)] If  $Orbit(\omega) = Orbit(\gamma)$ then $P(\{\omega\}) = 0$ iff $ P(\{\gamma\}) = 0$.
\item[(b)]  For any purely atomic representation
  $\{ t_\lambda\}_{\lambda \in \Lambda}$, define the support $\Omega = \text{supp}(P)$ of $\{ t_\lambda\}_{\lambda \in \Lambda}$ by
  \[\Omega : = \text{supp}(P)=\{ \omega \in \Lambda^\infty: P(\{\omega\}) \not= 0\}.\]
   We can decompose $\Omega$ as a disjoint union of orbits: 
  $\Omega\;=\;\bigsqcup_{\omega}\text{Orbit}(\omega).$  In particular, 
 \[\bigoplus_{\omega \in \Omega} P(\text{Orbit}(\omega))= \text{Id}_{\mathcal H}.\]
 \end{itemize}
 
\label{prop:orbits-of-atoms-are-nonzero} 
 \end{cor}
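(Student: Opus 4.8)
The plan is to prove the two parts of the Corollary separately, drawing on Proposition~\ref{prop:atomic-2} and the definition of $\text{Orbit}(\omega)$. For part (a), recall that $\gamma \in \text{Orbit}(\omega)$ means there exist $m, \ell \in \N^k$ with $\sigma^m(\gamma) = \sigma^\ell(\omega)$. I would first handle the generating moves of the orbit relation and then chain them together. The key observation from Proposition~\ref{prop:atomic-2}(a) is that the two operations $P(\{\omega\}) \mapsto P(\{\lambda\omega\}) = t_\lambda P(\{\omega\}) t_\lambda^*$ (prefixing) and $P(\{\omega\}) \mapsto P(\{\sigma^n(\omega)\}) = t_{\omega(0,n)}^* P(\{\omega\}) t_{\omega(0,n)}$ (shifting) are implemented by conjugating by $t_\lambda$ and $t_{\omega(0,n)}$ respectively. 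Since these $t$'s are partial isometries whose relevant source/range projections dominate the projections in question, conjugation by them sends a nonzero projection to a nonzero projection and the zero projection to the zero projection. Concretely, if $P(\{\omega\}) \neq 0$ then $t_\lambda P(\{\omega\}) t_\lambda^* = P(\{\lambda\omega\})$ is a subprojection of $t_\lambda t_\lambda^*$ that is unitarily equivalent (via the partial isometry $t_\lambda$) to $P(\{\omega\})$, hence nonzero; the shifting case is symmetric. Thus each generating move preserves the ``is nonzero'' property in both directions, and since $\sigma^m(\gamma) = \sigma^\ell(\omega)$ expresses $\gamma$ and $\omega$ as related by a finite composition of such moves (shift both down to a common path, then prefix back up), we conclude $P(\{\gamma\}) = 0 \iff P(\{\sigma^m(\gamma)\}) = 0 \iff P(\{\sigma^\ell(\omega)\}) = 0 \iff P(\{\omega\}) = 0$.

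For part (b), the decomposition $\Omega = \bigsqcup_\omega \text{Orbit}(\omega)$ is essentially formal: the orbit relation $\gamma \sim \omega \iff \exists\, m,\ell$ with $\sigma^m(\gamma) = \sigma^\ell(\omega)$ is an equivalence relation on $\Lambda^\infty$ (reflexivity and symmetry are immediate; transitivity uses that one can push the shift exponents up to a common value via $\sigma^{m+n} = \sigma^n \circ \sigma^m$), so its classes partition $\Lambda^\infty$. By part (a), the property $P(\{\omega\}) \neq 0$ is constant on each orbit, so $\Omega$ is a union of entire equivalence classes, giving the disjoint-union decomposition $\Omega = \bigsqcup_\omega \text{Orbit}(\omega)$ (indexed by a choice of representatives). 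Then I would invoke part (c) of Definition~\ref{defatomic}, namely $\bigoplus_{\omega \in \Omega} P(\{\omega\}) = \text{Id}_{\mathcal H}$, and regroup the sum according to the orbit partition. Because $P$ is a projection valued measure and each $\text{Orbit}(\omega)$ is a countable union of points (hence Borel, as noted in the excerpt), countable additivity of $P$ in the strong operator topology yields $P(\text{Orbit}(\omega)) = \bigoplus_{\gamma \in \text{Orbit}(\omega)} P(\{\gamma\})$, and summing these orthogonal projections over a set of orbit representatives recovers the full identity: $\bigoplus_\omega P(\text{Orbit}(\omega)) = \text{Id}_{\mathcal H}$.

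The main obstacle I anticipate is bookkeeping the orthogonality and convergence carefully in part (b). The displayed direct sum $\bigoplus_{\omega} P(\text{Orbit}(\omega)) = \text{Id}_{\mathcal H}$ silently asserts that the projections $P(\text{Orbit}(\omega))$ are mutually orthogonal over distinct orbit representatives; this requires that distinct atoms $\gamma \neq \gamma'$ give orthogonal projections $P(\{\gamma\}) \perp P(\{\gamma'\})$, which follows from the multiplicativity of the projection valued measure on disjoint Borel sets ($P(A)P(B) = P(A \cap B) = 0$ for disjoint $A, B$), applied to the disjoint singletons and then to disjoint orbits. Provided one states clearly that one is summing over a transversal (one representative per orbit) rather than over all of $\Omega$, the regrouping is justified by the unconditional strong-operator convergence of the sum of mutually orthogonal projections to the identity, and no delicate rearrangement issues arise. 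I would therefore foreground the equivalence-relation structure and the orthogonality of $P$ on disjoint sets as the two pillars, treating the convergence as a routine consequence of condition (c) together with countable additivity.
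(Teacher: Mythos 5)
Your proof is correct and follows exactly the route the paper intends: the paper states this Corollary as an ``immediate consequence'' of Proposition~\ref{prop:atomic-2}, and your argument---transporting nonvanishing of $P(\{\cdot\})$ along prefixing and shifting moves via conjugation by the partial isometries $t_\lambda$ and $t_{\omega(0,n)}$, then using the equivalence-relation structure of orbits and countable additivity of the projection valued measure---is precisely the filling-in of that claim. Your remark that the final direct sum should be read as running over a transversal of orbit representatives is a sensible clarification of the paper's notation.
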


\begin{rmk}\label{rmk:inv-subsp}
It follows from Proposition \ref{prop:atomic-2} that the subspace $P(\text{Orbit}(\omega))\subseteq \H$ is   invariant for the representation $\{t_\lambda\}_{\lambda\in\Lambda}$ in the sense of \cite{dutkay-jorgensen-atomic}. 
\end{rmk}

\begin{example}
\label{exampleatomic}
(cf.~\cite[Proposition 2.11]{KP}) Recall that for a row-finite, source-free $k$-graph $\Lambda$, the infinite path representation of $C^*(\Lambda)$ first given by A. Kumjian and D. Pask via the partial isometries $\{S_{\lambda}: \lambda\in \Lambda\}$ on the non-separable Hilbert space $\ell^2(\Lambda^{\infty})$ with orthonormal basis $\{h_\omega: \omega\in \Lambda^{\infty}\}$ is given by 
$$S_{\lambda}(h_{\omega})\;=\;\delta_{s(\lambda), r(\omega)}h_{\lambda\omega},\;\;\text{and}\;\;S_{\lambda}^*h_{\omega}=\delta_{\lambda, \omega(0,d(\lambda))}h_{\sigma^{d(\lambda)}(\omega)}.$$
One can check that this representation is purely atomic.  Indeed, for all $\omega\in \Lambda^{\infty},$
$$P(\{\omega\})=\lim_{n\in \N^k} S_{ \omega(0,n)}S_{ \omega(0,n)}^*\;=\;P_{\text{span}\, h_\omega}.$$
Here the limit is taken in the strong operator topology. This is a standard example to keep in mind when considering both purely atomic representations and the permutative representations which we discuss in Section \ref{sec:permutative_repn} below.
\end{example}

We now show that any intertwiner of purely atomic representations of a $k$-graph algebra intertwines the associated projection valued measures.  

\begin{prop}
\label{propintertwine}  Let $\Lambda$ be a row-finite $k$-graph with no sources, and let  $\{ t_\lambda\}_{\lambda \in \Lambda}$ and  $\{ \tilde t_\lambda\}_{\lambda \in \Lambda}$ be two purely atomic representations of $C^{\ast}(\Lambda)$ on the Hilbert spaces ${\mathcal H}$ and ${\mathcal H}',$ respectively.  Suppose that $U:{\mathcal H}\to {\mathcal H}'$ is an intertwining operator for these representations, so that 
\[
\tilde{t_\lambda}U\;=\;U t_{\lambda}\;\;\text{and}\;\;(\tilde t_\lambda)^*U=Ut_{\lambda}^*\quad\text{for all}\;\; \lambda\in \Lambda.
\]
Let $P$ and $\tilde{P}$ be the associated projection valued measures on ${\mathcal B}(\Lambda^{\infty})$ with respect to $\{t_\lambda\}$, $\{\tilde{t}_\lambda\}$.  Then for every $\omega\in \Lambda^{\infty},$
$$\tilde{P}(\{\omega\})U\;=\;U P(\{\omega\}).$$ 
Moreover, if $U$ is a unitary operator so that the representations are unitarily equivalent, then the supports of $P$ and $\tilde{P}$ are the same.
\end{prop}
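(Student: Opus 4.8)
The plan is to prove the intertwining relation $\tilde{P}(\{\omega\})U = U P(\{\omega\})$ first, and then deduce the statement about supports as a corollary. For the intertwining relation, I would start from the defining cylinder-set formula $P(Z(\lambda)) = t_\lambda t_\lambda^*$ together with the hypotheses $\tilde{t}_\lambda U = U t_\lambda$ and $\tilde{t}_\lambda^* U = U t_\lambda^*$. These immediately give, for each $\lambda \in \Lambda$,
\[
\tilde{P}(Z(\lambda)) U = \tilde{t}_\lambda \tilde{t}_\lambda^* U = \tilde{t}_\lambda U t_\lambda^* = U t_\lambda t_\lambda^* = U P(Z(\lambda)),
\]
so $U$ intertwines $P$ and $\tilde{P}$ on all cylinder sets.

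The next step is to promote this from cylinder sets to the singleton $\{\omega\}$. Since the cylinder sets generate the Borel $\sigma$-algebra (Theorem~\ref{conj-palle-proj-valued-measure-gen-case}), and since the collection of Borel sets $B$ for which $\tilde{P}(B) U = U P(B)$ holds is closed under complements and countable disjoint unions, a standard monotone-class / Dynkin argument shows that $\tilde{P}(B) U = U P(B)$ for every Borel set $B$. Alternatively, and more concretely, I would use the fact (from Example~\ref{exampleatomic} and the general theory) that $P(\{\omega\}) = \lim_{n \in \N^k} t_{\omega(0,n)} t_{\omega(0,n)}^*$ in the strong operator topology, where $Z(\omega(0,n))$ is a nested sequence of cylinder sets decreasing to $\{\omega\}$; applying the cylinder-set identity to each $n$ and passing to the strong-operator limit (using that multiplication by the bounded operator $U$ is strongly continuous on bounded sets) yields $\tilde{P}(\{\omega\}) U = U P(\{\omega\})$ directly. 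This concrete approach fits naturally with the techniques already established in Proposition~\ref{prop:atomic-2} and is likely the cleaner route in this setting.

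For the final claim about supports under the assumption that $U$ is unitary, I would argue as follows. Suppose $\omega \in \supp(P)$, so $P(\{\omega\}) \neq 0$. Then there is a nonzero vector $\xi$ in the range of $P(\{\omega\})$, and $U\xi \neq 0$ since $U$ is unitary. The intertwining relation gives $\tilde{P}(\{\omega\}) U\xi = U P(\{\omega\}) \xi = U\xi \neq 0$, so $\tilde{P}(\{\omega\}) \neq 0$, whence $\omega \in \supp(\tilde{P})$. The reverse inclusion follows identically by running the same argument with $U^*$, which is also unitary and intertwines the representations in the opposite direction (from $\tilde{t}_\lambda U = U t_\lambda$ one obtains $U^* \tilde{t}_\lambda = t_\lambda U^*$). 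Hence $\supp(P) = \supp(\tilde{P})$.

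I do not anticipate a serious obstacle here: the argument is essentially a clean transfer of the cylinder-set relation through the intertwiner. The one point requiring care is the passage from cylinder sets to singletons, where I must ensure the strong-operator limit interacts correctly with $U$; since $U$ is bounded this is routine, but it is the step where one genuinely uses the purely atomic hypothesis (guaranteeing that $P(\{\omega\})$ is a well-defined nonzero projection arising as such a limit) rather than just the formal algebra. The support statement then reduces to the observation that a unitary cannot kill a nonzero vector, making that part essentially immediate once the intertwining of the measures is in hand.
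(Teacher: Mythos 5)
Your proposal is correct and follows essentially the same route as the paper: the cylinder-set computation $\tilde{P}(Z(\lambda))U = \tilde{t}_\lambda\tilde{t}_\lambda^*U = Ut_\lambda t_\lambda^* = UP(Z(\lambda))$, extension to singletons (the paper simply asserts this; your strong-operator limit over the nested sets $Z(\omega(0,n))$ is the natural way to fill in that step and matches the technique of Proposition~\ref{prop:atomic-2}), and then the support claim from the conjugation relation when $U$ is unitary. The paper phrases the last step as $\tilde{P}(\{\omega\}) = UP(\{\omega\})U^*$ giving equal range dimensions, which is the same observation as your ``a unitary cannot kill a nonzero vector.''
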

\begin{proof}
Since $U$ intertwines the representations, we see that for every $\lambda\in \Lambda,$
$$\tilde{P}(Z(\lambda))U=\tilde t_\lambda(\tilde t_\lambda)^*U\;=\;Ut_{\lambda}t_{\lambda}^*\;=\;UP(Z(\lambda)).$$
Therefore $U$ intertwines the projection valued measures on all Borel subsets in $\Lambda^{\infty},$ including the point sets. It follows that if $U$ is unitary, then for every $\omega\in \Lambda^{\infty},$ we have 
$$\tilde{P}(\{\omega\})=UP(\{\omega\})U^*,$$ so that $\text{dim}P(\{\omega\})=\text{dim}\tilde{P}(\{\omega\})$ for all $\omega\in \Lambda^{\infty},$ and  hence $\text{supp}(P)=\text{supp}(\tilde{P}).$
\end{proof}
We now derive some straightforward consequences of Corollary~\ref{prop:orbits-of-atoms-are-nonzero}.

\begin{prop}
Suppose that an irreducible representation $\{t_\lambda\}_{\lambda \in \Lambda}$  of $C^*(\Lambda)$ has an atom $\omega$.  Then $\{t_\lambda\}_{\lambda \in \Lambda}$ is purely atomic and the associated projection valued measure is supported on Orbit$(\omega)$.  
\label{prop-atomic-1}
\end{prop}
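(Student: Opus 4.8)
The plan is to exhibit the projection $Q := P(\text{Orbit}(\omega))$ as an element of the commutant of the representation, deduce $Q = \text{Id}_{\mathcal H}$ from irreducibility, and then read off pure atomicity from the countability of the orbit. Note first that $Q$ is a well-defined projection, since $\text{Orbit}(\omega)$ is a Borel set (a countable union of points), and that $Q \neq 0$ because $P(\{\omega\}) \leq Q$ and $\omega$ is an atom. This $Q$ is exactly the projection onto the invariant subspace flagged in Remark~\ref{rmk:inv-subsp}, so the core task is to verify that this subspace genuinely reduces the representation.

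The heart of the argument is to show $Q$ commutes with every generator $t_\lambda$. Here I would exploit the two-sided prefix-invariance of the orbit: for $x \in Z(s(\lambda))$ one has $\lambda x \in \text{Orbit}(\omega)$ if and only if $x \in \text{Orbit}(\omega)$, because $x = \sigma^{d(\lambda)}(\lambda x)$ places $x$ and $\lambda x$ in the same orbit. Consequently $\sigma_\lambda^{-1}(\text{Orbit}(\omega)) = \text{Orbit}(\omega) \cap Z(s(\lambda))$. Applying Proposition~\ref{prop-atomic-basic-equns}(c) — extended from cylinder sets to this countable Borel set by taking strong-operator limits along cylinders shrinking to points and summing over the orbit, exactly as in the proof of Proposition~\ref{prop:atomic-2} — yields $t_\lambda\, P(\text{Orbit}(\omega)\cap Z(s(\lambda))) = P(\text{Orbit}(\omega))\, t_\lambda = Q t_\lambda$. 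Since $t_\lambda P(Z(s(\lambda))) = t_\lambda$ and $Q$ commutes with the spectral projection $P(Z(s(\lambda)))$, the left-hand side collapses to $t_\lambda Q$, giving $t_\lambda Q = Q t_\lambda$. Taking adjoints gives $t_\lambda^* Q = Q t_\lambda^*$, so $Q$ lies in the commutant of $\{t_\lambda, t_\lambda^*\}$ and hence of $C^*(\Lambda)$. By irreducibility the commutant is $\mathbb{C}\,\text{Id}_{\mathcal H}$, so $Q \in \{0, \text{Id}_{\mathcal H}\}$; as $Q \neq 0$, we conclude $Q = \text{Id}_{\mathcal H}$, i.e.\ $P(\Lambda^\infty \setminus \text{Orbit}(\omega)) = 0$ and $P$ is supported on $\text{Orbit}(\omega)$.

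Finally I would deduce pure atomicity. Row-finiteness makes $\text{Orbit}(\omega)$ countable: for each pair $m,\ell \in \N^k$ there are only finitely many $\gamma$ with $\sigma^m(\gamma) = \sigma^\ell(\omega)$, and there are countably many such pairs. Enumerating $\text{Orbit}(\omega) = \{\gamma_i\}$ and using countable additivity of $P$ in the strong operator topology, $\text{Id}_{\mathcal H} = Q = \sum_i P(\{\gamma_i\})$. Setting $\Omega := \{\gamma \in \text{Orbit}(\omega): P(\{\gamma\}) \neq 0\}$, condition (b) of Definition~\ref{defatomic} holds by construction, condition (a) holds because $\Lambda^\infty \setminus \Omega$ is the union of $\Lambda^\infty \setminus \text{Orbit}(\omega)$ with countably many singletons of $P$-measure zero, and condition (c) holds since discarding the zero summands leaves $\bigoplus_{\gamma \in \Omega} P(\{\gamma\}) = \text{Id}_{\mathcal H}$. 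Thus the representation is purely atomic with support contained in $\text{Orbit}(\omega)$.

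I expect the main obstacle to be the measure-theoretic extension of Proposition~\ref{prop-atomic-basic-equns}(c) from cylinder sets to the Borel set $\text{Orbit}(\omega)$, together with the bookkeeping showing that the prefix-invariance of the orbit yields the set identity $\sigma_\lambda^{-1}(\text{Orbit}(\omega)) = \text{Orbit}(\omega)\cap Z(s(\lambda))$. Once the commutation relation $t_\lambda Q = Q t_\lambda$ is secured, the appeal to irreducibility and the countability argument for pure atomicity are routine.
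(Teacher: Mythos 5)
Your proof is correct, and it runs the argument in the complementary direction to the paper's. The paper forms the projection $P:=\sum_{x\notin \mathrm{Orbit}(\omega)}P(\{x\})$ (the sum of the point masses \emph{off} the orbit), verifies $Pt_\lambda=t_\lambda P$ by computing $P(\{x\})t_\lambda$ directly from $\Lambda^{\min}$ and re-indexing the sum via $x\mapsto \sigma^{d(\lambda)}(x)$, and then uses irreducibility together with $P<1$ (because $\omega$ is an atom) to force $P=0$; Corollary~\ref{prop:orbits-of-atoms-are-nonzero} then makes every orbit point an atom. You instead put $Q:=P(\mathrm{Orbit}(\omega))$ itself in the commutant and force $Q=\mathrm{Id}_{\mathcal H}$. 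The underlying commutation computation is the same (your extension of Proposition~\ref{prop-atomic-basic-equns}(c) to the orbit by SOT limits along shrinking cylinders is exactly how the paper justifies Proposition~\ref{prop:atomic-2}), but your packaging buys something: $Q=\mathrm{Id}_{\mathcal H}$ immediately gives $P(\Lambda^\infty\setminus\mathrm{Orbit}(\omega))=0$, i.e.\ condition (a) of Definition~\ref{defatomic}, whereas the paper's $P=0$ only rules out \emph{atoms} off the orbit and leaves the absence of a continuous part of the measure off the orbit implicit (one recovers it by running the same commutant argument on your $Q$). Two small repairs: the countability of $\mathrm{Orbit}(\omega)$ does not follow from row-finiteness as you state it --- $\{\gamma:\sigma^m(\gamma)=\sigma^\ell(\omega)\}$ is parametrized by $\Lambda^m r(\sigma^\ell(\omega))$, paths with prescribed \emph{source}, which row-finiteness does not bound --- but it does follow from $\Lambda$ being a countable category, which is all you need; and to conclude that the support \emph{equals} $\mathrm{Orbit}(\omega)$ rather than merely being contained in it, invoke Corollary~\ref{prop:orbits-of-atoms-are-nonzero}(a), which shows every point of the orbit of the atom $\omega$ is itself an atom.
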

\begin{proof}
Let $\{t_\lambda\}$ be an irreducible representation of $C^*(\Lambda)$ with an atom $\omega\in \Lambda^\infty$.
 We   first  observe  that for any $x \in \Lambda^\infty$, 
 \[ P(\{x\}) t_\lambda = \begin{cases} 0, & x \not\in Z(\lambda) \\
 t_\lambda P(\{\sigma^{d(\lambda)}(x)\}), & x \in Z(\lambda).
 \end{cases}\]
  This follows from writing $P(\{x\}) = \lim \{ t_{\eta_n} t_{\eta_n}^*: x \in Z(\eta_n)\}$, with $\eta_n=x(0,n)$, and observing that if $d(\eta) \geq d(\lambda)$, then 
  \[ t_\eta^* t_\lambda = \sum_{(\rho, \nu) \in \Lambda^{\min}(\eta, \lambda)} t_\nu t_\rho^* = \begin{cases} t_\rho^*, & \eta = \lambda \rho \\ 
  0, & \text{ else.}
  \end{cases}\]
  Therefore, if $x \not\in Z(\lambda)$ then we can find $Z(\eta)$ that contains $x$ with $d(\eta) \geq d(\lambda)$ and such that $\eta$ does not extend $\lambda$.
  Consequently, if we set 
  \[ P := \sum_{x\not\in Orbit(\omega)} P(\{x\})\]
(where the limit defining the sum is taken in the strong operator topology),  
   then $ P t_\lambda   = t_\lambda \sum_{x \in Z(\lambda) \backslash Orbit(\omega)} P(\{ \sigma^{d(\lambda)}(x)\}).$
Moreover, $P$ is a projection since $P(\{x\}) P(\{y\}) = 0$ for $x\not= y$.  
  
  On the other hand, 
  \[t_\lambda P = t_\lambda \sum_{ \{y \not\in Orbit(\omega): r(y) = s(\lambda)\}} P(\{y\}).\]
  Since $\{y \not\in Orbit(\omega): r(y) = s(\lambda)\} = \{ \sigma^{d(\lambda)}(x): x \in Z(\lambda) \backslash Orbit(\omega)\}$, we have 
  \[ t_\lambda P = P t_\lambda\]
  for any $\lambda \in \Lambda$.  
  Our assumption that $\{t_\lambda\}_{\lambda}$ is irreducible now implies that $P$ must be a multiple of the identity.  However, $P < 1$ since $\omega$ is an atom, so we must have $P =0$.
(When $\Lambda^\infty$ has only one point, $P=1$ on the atom, and hence $\{t_\lambda\}_{\lambda\in\Lambda}$ is purely atomic).
Corollary~\ref{prop:orbits-of-atoms-are-nonzero} now implies that $P(\{x\}) \not= 0$ for every $x \in Orbit(\omega)$, completing the proof that $\{t_\lambda\}_{\lambda \in \Lambda}$ is purely atomic.
\end{proof}

The following result was inspired by Corollary 4.8 of \cite{dutkay-jorgensen-atomic}, but the technical details are much more intricate in the setting of higher-rank graphs.
\begin{thm}
\label{thm-atomic-repres}
For a row-finite, source-free $k$-graph $\Lambda$, let $\{t_\lambda\}_{\lambda \in \Lambda}, \{\tilde{t}_{\lambda}\}_{\lambda \in \Lambda}$ generate purely atomic representations of $C^*(\Lambda)$, with associated projection valued measures $P,\tilde{P}$.    Then the two representations are unitarily equivalent if and only if the following conditions are satisfied:
\begin{enumerate}
\item[(a)] $\text{supp}(P)\;=\;\text{supp}(\tilde{P}) =: \Omega;$
\item[(b)] For every $x\in \Omega,\; \text{dim}[\text{Range}(P(\{x\}))]\;=\;\text{dim}[\text{Range}(\tilde{P}(\{x\}) )].$  
\end{enumerate}
\end{thm}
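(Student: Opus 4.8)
The forward implication is essentially already in hand. If $U:\mathcal H\to\mathcal H'$ is a unitary intertwiner, then Proposition~\ref{propintertwine} gives $\tilde P(\{\omega\})=U P(\{\omega\})U^*$ for every $\omega\in\Lambda^\infty$, whence $\operatorname{supp}(P)=\operatorname{supp}(\tilde P)$ and, since $U$ is unitary, $\dim\operatorname{Range}(P(\{x\}))=\dim\operatorname{Range}(\tilde P(\{x\}))$ for each atom $x$. So the real content of the theorem lies in the converse, and the plan is to manufacture a unitary intertwiner from the data (a), (b).

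My first move is to reduce to a single orbit. By Corollary~\ref{prop:orbits-of-atoms-are-nonzero}(b) the common support $\Omega$ decomposes as a disjoint union of orbits, and by Remark~\ref{rmk:inv-subsp} each $P(\operatorname{Orbit}(\omega))\mathcal H$ and each $\tilde P(\operatorname{Orbit}(\omega))\mathcal H'$ is invariant for the respective representation; moreover $\bigoplus_\omega P(\operatorname{Orbit}(\omega))=\operatorname{Id}$ by the same corollary. It therefore suffices to produce, on each orbit, a unitary from $P(\operatorname{Orbit}(\omega_0))\mathcal H$ onto $\tilde P(\operatorname{Orbit}(\omega_0))\mathcal H'$ that intertwines the two representations, and then form the orthogonal direct sum over orbits.

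On a fixed orbit I use Proposition~\ref{prop:atomic-2} to transport between atoms. For $r(\omega)=s(\lambda)$, part (a) shows that $t_\lambda$ restricts to an isometry of $\operatorname{Range}(P(\{\omega\}))$ onto $\operatorname{Range}(P(\{\lambda\omega\}))$, with inverse $t_\lambda^*$ on that range, and likewise $t_{\omega(0,n)}^*$ carries $\operatorname{Range}(P(\{\omega\}))$ isometrically onto $\operatorname{Range}(P(\{\sigma^n\omega\}))$; the analogous statements hold in the tilde picture. Fix a base atom $\omega_0$; by (b) choose any unitary $U_0:\operatorname{Range}(P(\{\omega_0\}))\to\operatorname{Range}(\tilde P(\{\omega_0\}))$. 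For arbitrary $\gamma$ in the orbit, pick $m,\ell\in\N^k$ with $\sigma^m(\gamma)=\sigma^\ell(\omega_0)=:\tau$; then $V_\gamma:=t_{\gamma(0,m)}t_{\omega_0(0,\ell)}^*$ is an isometry of $\operatorname{Range}(P(\{\omega_0\}))$ onto $\operatorname{Range}(P(\{\gamma\}))$, with a parallel $\tilde V_\gamma$, and I set $U|_{\operatorname{Range}(P(\{\gamma\}))}:=\tilde V_\gamma\, U_0\, V_\gamma^{-1}$. Taking $U=\bigoplus_\gamma U|_{\operatorname{Range}(P(\{\gamma\}))}$, condition (c) (the atoms span) makes $U$ a unitary between the two orbit subspaces, and the intertwining relation $Ut_\lambda=\tilde t_\lambda U$ is then checked atom-by-atom: on $\operatorname{Range}(P(\{\omega\}))$ the operator $t_\lambda$ vanishes unless $s(\lambda)=r(\omega)$, in which case it maps onto $\operatorname{Range}(P(\{\lambda\omega\}))$ compatibly with $U$ by Proposition~\ref{prop:atomic-2}(a), while the adjoint relation follows symmetrically using part (b).

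The step I expect to be the main obstacle is the \emph{well-definedness} of $U|_{\operatorname{Range}(P(\{\gamma\}))}$, namely its independence of the auxiliary choice of $(m,\ell)$. Two such choices differ by an element of the isotropy group $\{\,p-q\in\Z^k:\sigma^p(\omega_0)=\sigma^q(\omega_0)\,\}$ of $\omega_0$ in $\mathcal G_\Lambda$, and the resulting ambiguity is exactly the holonomy unitary $t_{\omega_0(0,q)}t_{\omega_0(0,p)}^*$ acting on $\operatorname{Range}(P(\{\omega_0\}))$. When $\omega_0$ is \emph{aperiodic} this isotropy group is trivial, the transport is unambiguous, and equality of dimensions in (b) is precisely what is needed to choose $U_0$; this is the case where I expect the argument to go through cleanly. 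When $\omega_0$ is \emph{periodic}, however, one must arrange that $U_0$ intertwines these holonomy unitaries with their tilde-counterparts, so the crux of the proof is to extract from $P$ and $\tilde P$ that the periodicity data they carry are compatible and to select $U_0$ accordingly. I would therefore isolate the periodic orbits and treat the intertwining of their isotropy representations as the delicate, decisive case of the construction.
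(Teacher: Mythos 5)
Your construction is the same as the paper's: the forward direction via Proposition~\ref{propintertwine}, the reduction to a single orbit, and the transported operator $U_\gamma=\tilde t_a\tilde t_{\omega(0,j)}^*U_\omega t_{\omega(0,j)}t_a^*$ for $\gamma=a\sigma^j(\omega)$ is exactly the operator of Equation~\eqref{eq:unitary}, with the same verification plan (well-definedness, unitarity, intertwining checked atom by atom). The gap is the one you name and then leave open: you never actually prove well-definedness when $\omega_0$ is periodic, you only announce that one must ``arrange that $U_0$ intertwines the holonomy unitaries'' and that this is the delicate case. Since well-definedness of $U_\gamma$ is the entire content of the converse, your argument is complete only for orbits with trivial isotropy, and as a proof of the theorem as stated it is unfinished.

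You should know, however, that your diagnosis is not merely a technicality you failed to dispatch --- the obstruction is real, and it also sits unaddressed inside the paper's own proof. There, both candidate operators for $U_\gamma$ are compared to $\tilde t_A\tilde t_{\omega(0,N\mathbf{1})}^*U_\omega t_{\omega(0,N\mathbf{1})}t_A^*$ with $A=a\omega(j,N\mathbf{1})$ and $A'=a'\omega(j',N\mathbf{1})$; the step asserting $A=A'$ requires $d(a)-j=d(a')-j'$, which follows from $a\sigma^j(\omega)=a'\sigma^{j'}(\omega)$ precisely when $\omega$ has trivial isotropy, i.e.\ is aperiodic. On a periodic orbit the two candidate definitions differ by conjugation with the holonomy unitary, and no choice of $U_0$ repairs this in general: already for $\mathcal{O}_2$ (the $1$-graph with one vertex and two edges) and $\omega=111\cdots$, the representations on $\ell^2(\mathrm{Orbit}(\omega))$ given by $t_ie_\gamma=e_{i\gamma}$ except $t_1e_\omega=ze_\omega$, for $z\in\mathbb{T}$, are all purely atomic with the same support and one-dimensional atoms (the phases cancel in $t_at_a^*$, so $P$ is independent of $z$), yet any intertwiner is diagonal by Proposition~\ref{propintertwine} and hence forces $z=z'$. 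So conditions (a) and (b) cannot suffice on periodic orbits; the construction closes only under an aperiodicity hypothesis on the orbits in $\Omega$, or with an added condition matching the isotropy (holonomy) data of the two representations. Your instinct to isolate the periodic orbits is correct, but the resolution there is an extra hypothesis or an extra invariant, not a cleverer choice of $U_0$.
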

\begin{proof}
Suppose that the purely atomic representations $\{t_\lambda\}_{\lambda \in \Lambda}, \{t'_{\lambda}\}_{\lambda \in \Lambda}$ on the same Hilbert space ${\mathcal H}$ are unitarily equivalent.  Proposition \ref{propintertwine} then implies that $P, \tilde P$ have the same support $\Omega$, and moreover that the intertwining unitary takes $P(\{\omega\})$ to $\tilde P(\{\omega\})$ for every $\omega \in \Omega$.

Now, suppose that conditions $(a)$ and $(b)$ hold; we will show that the representations $\{t_\lambda\}_{\lambda \in \Lambda}$ and  $\{\tilde{t}_{\lambda}\}_{\lambda \in \Lambda}$ of $C^*(\Lambda)$ are unitarily equivalent.

Without loss of generality, we suppose that our representations are irreducible, so that $\Omega$  consists of a single orbit, $\Omega=\text{Orbit}(\omega)$.  Since
$$\text{dim}[\text{Range}(P(\{\omega\}))]\;=\;\text{dim}[\text{Range}(\tilde{P}(\{\omega\}))]$$
{by hypothesis,}
 there is a unitary isomorphism $U_{\omega}: \text{Range}(P(\{\omega\}))\;\to\;\text{Range}(\tilde{P}(\{\omega\})),$ since Hilbert spaces of the same dimension are isomorphic. 
 For every $\gamma\in \Omega=\text{Orbit}(\omega),$
we now construct a unitary $U_{\gamma}: \text{Range}(P(\{\gamma\})) \to \text{Range}(\tilde{P}(\{\gamma\}))$  as follows.
If $\gamma \in \text{Orbit}(\omega)\subset \Lambda^{\infty}$ satisfies $\gamma = a \sigma^j(\omega) $ 
 for some $a \in \Lambda$, we would like to define 
$U_{\gamma}: \text{Range}(P(\{\gamma\})) \to \text{Range}(\tilde{P}(\{\gamma\}))$
 by
\begin{equation}\label{eq:unitary}
U_{\gamma}:=\; \tilde{t}_a\tilde{t}_{\omega(0,j)}^*U_{\omega}t_{\omega(0,j)}t_a^*.
\end{equation}
We must check that $U_{\gamma}$ is well-defined and unitary, and {that \[U := \bigoplus_{\gamma \in Orbit(\omega)} U_\gamma\]} intertwines the representations.

To see that $U_{\gamma}$ is well-defined, suppose that $\gamma = a \sigma^j(\omega) = a' \sigma^{j'}(\omega)$. Fix $\xi\in {\mathcal H}$ and $\varepsilon>0.$
Since the projections $P(Z(\omega(0,n))$ tend to $P(\{\omega\})$ in the strong operator topology, it is possible to find $N_1\in \mathbb N$ (depending on $\xi\in{\mathcal H}$ and $\varepsilon>0$) such that, if we write ${\bf 1} = (1, \ldots 1) \in \N^k$, then  $ N_1 {\bf 1}  \geq j, j'$ and 
whenever $N\geq N_1,$
\begin{equation}\|P(\{\omega\})t_{\omega(0,j)}t_a^*(\xi)-P(Z(\omega(0,N {\bf 1}))t_{\omega(0,j)}t_a^*(\xi)\| <\varepsilon.\label{eq:zeroth}
\end{equation}

Write $A\;=\;a\omega(j, N {\bf 1}).$ Note that $\gamma(0,N{\bf 1})=A$ and 
$$t_{\omega(0, N {\bf 1})}t_{\omega(0, N{\bf 1})}^*t_{\omega(0,j)}t_a^* 
=\;t_{\omega(0, N{\bf 1})}t_{\omega(j, N {\bf 1})}^*t_a^*=t_{\omega(0, N {\bf 1})}t_A^*.$$
Consequently, \eqref{eq:zeroth} implies that for $N\geq N_1,$ we have 
\begin{equation}\| P(\{\omega\}) t_{\omega(0,j)} t_a^*(\xi) - t_{\omega(0, N{\bf 1})} t_A^*(\xi)\| 
 <\varepsilon\label{eq:first}
\end{equation}
and
\begin{align*}
\| U_{\omega} t_{\omega(0, N{\bf 1})}t_A^*(\xi)\;& -\;U_{\omega}P(\{\omega\})t_{\omega(0,j)}t_a^*(\xi)\|  =\| U_{\omega} t_{\omega(0, N{\bf 1})}t_{\omega(j, N{\bf 1})}^*t_a^*(\xi) \;-\;U_{\omega}P(\{\omega\})t_{\omega(0,j)}t_a^*(\xi)\|\\
& =\|U_{\omega}t_{\omega(0, N{\bf 1})}t_{\omega(0, N {\bf 1})}^*t_{\omega(0,j)}t_a^*(\xi)\;-\;U_{\omega}P(\{\omega\}))t_{\omega(0,j)}t_a^*(\xi)\| \\
&=\|U_{\omega}P(Z(\omega(0,N  {\bf 1})))t_{\omega(0,j)}t_a^*(\xi)-U_{\omega}P(\{\omega\}))t_{\omega(0,j)}t_a^*(\xi)\|<\varepsilon.\end{align*}
In the same way, we can find $N_2$ large enough so that for the same $\xi\in {\mathcal H}$ and the same $\varepsilon>0,$ for all 
$N\geq N_2,$
$$\|\tilde{P}(\{\omega\})U_{\omega}t_{\omega(0,j)}t_a^*(\xi)-\tilde{P}(Z(\omega(0,N{\bf 1})))U_{\omega}t_{\omega(0,j)}t_a^*(\xi)\|<\varepsilon.$$
Then, since $\tilde{t}_a$ and $\tilde{t}_{\omega(0,j)}$ are partial isometries,
$$\|\tilde{t}_a\tilde{t}_{\omega(0,j)}^* \tilde{P}(\{\omega\})U_{\omega}t_{\omega(0,j)}t_a^*(\xi)-\tilde{t}_a\tilde{t}_{\omega(0,j)}^*\tilde{P}(Z(\omega(0,N{\bf 1})))U_{\omega}t_{\omega(0,j)}t_a^*(\xi)\|<\varepsilon.$$
We now write, for $N\geq N_2,$
\begin{equation*}\begin{split}
\tilde{t}_a\tilde{t}_{\omega(0,j)}^*\tilde{P}(Z(\omega(0,N{\bf 1})))U_{\omega}t_{\omega(0,j)}t_a^*\; &=\;\tilde{t}_a\tilde{t}_{\omega(0,j)}^*\tilde{t}_{\omega(0,N{\bf 1})}\tilde{t}_{\omega(0,N{\bf 1})}^*U_{\omega}t_{\omega(0,j)}t_a^*\\
&\;=\;\tilde{t}_a\tilde{t}_{\omega(j,N{\bf 1})}\tilde{t}_{\omega(0,N{\bf 1})}^*U_{\omega}t_{\omega(0,j)}t_a^*\\
&\;=\;\tilde{t}_A\tilde{t}_{\omega(0,N{\bf 1})}^*U_{\omega}t_{\omega(0,j)}t_a^*.
\end{split}\end{equation*}
Therefore for $N\geq 
N_2$ we have
\begin{equation}\|\tilde{t}_a\tilde{t}_{\omega(0,j)}^* \tilde{P}(\{\omega\})U_{\omega}t_{\omega(0,j)}t_a^*(\xi)-\tilde{t}_A\tilde{t}_{\omega(0,N{\bf 1})}^* U_{\omega}t_{\omega(0,j)}t_a^*(\xi)\|<\varepsilon.\label{eq:tech-1}
\end{equation}

Since $U_\omega P(\{\omega\}) = U_\omega$ and $\tilde{P}(\{\omega\}) U_\omega = U_\omega$, Equations \eqref{eq:tech-1} and \eqref{eq:first} combine to give 
%
$$ \|\tilde{t}_a\tilde{t}_{\omega(0,j)}^* U_{\omega}t_{\omega(0,j)}t_a^*(\xi)-\tilde{t}_A\tilde{t}_{\omega(0,N{\bf 1})}^* U_{\omega} t_{\omega(0, N\cdot {\bf 1})}t_A^*(\xi)\|<2\varepsilon$$
whenever $N\geq \text{max}\{N_1,N_2\}.$

In the same way, for the same $\varepsilon > 0$ and $\xi \in \mathcal H$, we can find $M'\in \N$ (depending on $a',\; j',$ and $\xi\in {\mathcal H}$) such that {for any $N' \geq M'$,} setting 
$A'=a'\omega(j', N' {\bf 1}),$ we have  $\gamma(0,N')=A'$ and 
$$\|\tilde{t}_{a'}\tilde{t}_{\omega(0,j')}^*
 U_{\omega}t_{\omega(0,j')}t_{a'}^*(\xi)-\tilde{t}_{A'}\tilde{t}_{\omega(0,N'{\bf 1})}^* U_{\omega} t_{\omega(0, N'\cdot {\bf 1})}t_{A'}^*(\xi)\|<2\varepsilon.$$
{Choosing  $N = N' \geq \max \{ M', N_1, N_2\}$ implies that $A = \gamma(0, N {\bf 1}) = A'$.  Thus, }
\begin{equation*}
\|\tilde{t}_a\tilde{t}_{\omega(0,j)}^* 
U_{\omega}t_{\omega(0,j)}t_a^*(\xi)-\tilde{t}_{a'}\tilde{t}_{\omega(0,j')}^* 
U_{\omega}t_{\omega(0,j')}t_{a'}^*(\xi)\|<4\varepsilon.\end{equation*}
Since $\varepsilon$ {and $\xi$} were arbitrary, it follows that  if $\gamma = a \sigma^j(\omega) = a' \sigma^{j'}(\omega)$,
\[\tilde{t}_a \tilde{t}_{\omega(0, j)}^* U_{\omega}t_{\omega(0,j)}t_a^*=\tilde{t}_{a'}\tilde{t}_{\omega(0,j')}^* 
U_{\omega}t_{\omega(0,j')}t_{a'}^*.\]
Thus, the operator $U_{\gamma} : \text{Range}(P(\{\gamma\})) \to \text{Range}(\tilde{P}(\{\gamma\}))$ of Equation \eqref{eq:unitary}
is well-defined.

Now we want to show that $U_{\gamma}$ is unitary. Since $U_\omega$ is a unitary and hence $U_\omega^* U_\omega = P(\{\omega\})$, using the facts that $ \tilde P(\{\omega\}) U_\omega = U_\omega$ and $\tilde P(\{\omega\}) \tilde P(Z(\omega(0,j))) = \tilde P(\{\omega\})$, one easily computes that $U_\gamma ^* U_\gamma = t_a t_{\omega(0,j)}^* P(\{\omega\}) t_{\omega(0,j)} t_a^*$.

We now note that for $\gamma= a \sigma^j(\omega),$ 
$t_a^*$ takes $\text{Range}(P(\{\gamma\}))$ to $ \text{Range}(P(\{\sigma^j(\omega)\}))$
and 
$t_{\omega(0,j)}$ takes $\text{Range}(P(\{\sigma^j(\omega)\}))$ to $\text{Range}(P(\{\omega\})).$ {Recalling that $U_\gamma = U_\gamma P(\{\gamma\})$, we deduce that 
\begin{align*} U_\gamma^* U_\gamma &= t_a t_{\omega(0,j)}^* t_{\omega(0,j)} t_a^* P(\{\gamma\}) = t_a t_a^* P(\{\gamma\}) \\
&= P(Z(a)) P(\{\gamma\}) = P(\{\gamma\}).
\end{align*}
}
Similarly, one can show that
$$U_{\gamma}U_{\gamma}^*\;=\;\tilde{P}(\{\gamma\}),$$ which implies that $U_\gamma$ is unitary from its domain to its range.

To show that $U= \bigoplus_{\gamma \in Orbit(\omega)} U_\gamma$ intertwines the representations, we must establish that for $\lambda\in\Lambda$ with $s(\lambda)=r(\omega)$,
$$\tilde{t}_{\lambda}U_{\omega}\;=\;U_{\lambda\omega}t_{\lambda}.$$
By our construction of $U_{\lambda \omega}$, if $s(\lambda)\;=\; r(\omega),$
\[
U_{\lambda\omega}t_{\lambda}\;=\;\tilde{t}_{\lambda}U_{\omega}t_{\lambda}^*t_{\lambda}.
\]
Using the fact that $t_\lambda$ is an isometry {and that $U_\omega = U_\omega P(\{\omega\}) = U_\omega P(\{\omega\}) P(Z(\omega(0,n)))$ for any $n \in \N^k$,} we obtain
\[\begin{split}
\tilde{t}_{\lambda} U_{\omega}t_{\lambda}^*t_{\lambda}&=
\tilde{t}_\lambda U_\omega P(Z(s(\lambda)))=\tilde{t}_\lambda U_\omega P(Z(r(\omega)))=\tilde{t}_\lambda U_\omega.
\end{split}\]


Therefore we see that 
$$U= \bigoplus_{\gamma\in\;\text{Orbit}(\omega)}U_{\gamma}:\;\bigoplus_{\gamma\in\;\text{Orbit}(\omega)}\text{Range}(P(\{\gamma\}))\;\to\; \;\bigoplus_{\gamma\in\;\text{Orbit}(\omega)}\text{Range}(\tilde{P}(\{\gamma\}))$$
is a unitary operator that intertwines the representations $\{t_{\lambda}: \lambda\in \Lambda\}$ and $\{\tilde{t}_{\lambda}: \lambda\in \Lambda\}.$
\end{proof}

Recall that two representations $\pi, \pi'$ of a $C^*$-algebra $A$ are  \emph{disjoint} if  no nonzero
subrepresentation of $\pi$ is unitarily equivalent to a subrepresentation of $\pi'$.

{The following proposition shows that different orbits support disjoint representations.  It also characterizes the intertwiners of those purely atomic representations which are supported on the same orbit, and shows when a purely atomic representation is irreducible.}

\begin{prop}\label{prop:irr-rank1}
For a row-finite, source-free $k$-graph $\Lambda,$  let $\{t_\lambda\}_{\lambda \in \Lambda}, \{\tilde{t}_{\lambda}\}_{\lambda \in \Lambda}$ generate purely atomic representations of $C^*(\Lambda)$, with associated projection valued measures $P, \tilde{P}$.  Suppose that $P, \tilde P$ are supported on Orbit$(\gamma)$ and Orbit$(\omega)$ respectively, for some $\gamma, \omega \in \Lambda^\infty$.  
\begin{enumerate}
\item[(a)] If Orbit$(\gamma) \not= \text{Orbit}(\omega)$ then the representations are disjoint.
\item[(b)] If Orbit$(\gamma) = \text{Orbit}(\omega)$, then the operators   $X: {\mathcal{H}} \to \tilde{\mathcal{H}}$ which intertwine the representations $\{t_\lambda\}_{\lambda\in\Lambda}$ (on $\mathcal{H}$) and  $\{\tilde{t}_\lambda\}_{\lambda\in\Lambda}$ (on $\tilde{\mathcal{H}}$) are in bijective correspondence with operators  $Y_\omega: \Ran ( P(\{\omega\})) \to \Ran( \tilde{P}(\{\omega\}))$, via the formula 
\[
Y_\omega=\tilde{P}(\{\omega\})X P(\{\omega\}).
\]
$Y_\omega: \Ran (P(\{\omega\})) \to \Ran(\tilde{P}(\{\omega\}))$.
\item[(c)] The representation  $\{t_\lambda\}_{\lambda\in\Lambda}$ is irreducible if and only if  $\dim[ \Ran (P(\{\omega\}))]= 1.$ 
\end{enumerate}
\end{prop}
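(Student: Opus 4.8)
The plan is to reduce everything to a single fact: \emph{any} bounded intertwiner automatically intertwines the associated projection valued measures. Concretely, if $X\colon \mathcal H \to \tilde{\mathcal H}$ intertwines the two representations of $C^*(\Lambda)$, then it intertwines every element of the algebra, so in particular $\tilde t_\lambda X = X t_\lambda$ \emph{and} $\tilde t_\lambda^* X = X t_\lambda^*$ for all $\lambda$. The proof of Proposition~\ref{propintertwine} then applies verbatim and gives $\tilde P(\{x\})X = X P(\{x\})$ for every $x \in \Lambda^\infty$. This identity is the engine for all three parts.

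For part (a), recall that distinct orbits are disjoint, so $\text{Orbit}(\gamma)\cap \text{Orbit}(\omega)=\emptyset$. Fix an intertwiner $X$. For each $x \in \text{Orbit}(\gamma)$ we have $x \notin \text{Orbit}(\omega)$, hence $\tilde P(\{x\}) = 0$ and therefore $XP(\{x\}) = \tilde P(\{x\})X = 0$. Summing over $x \in \text{Orbit}(\gamma)$ in the strong operator topology and using $\sum_{x\in\text{Orbit}(\gamma)}P(\{x\}) = \mathrm{Id}_{\mathcal H}$ (Corollary~\ref{prop:orbits-of-atoms-are-nonzero}, since here $\mathrm{supp}(P)=\text{Orbit}(\gamma)$) gives $X = 0$. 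As disjointness of two representations is equivalent to the vanishing of their intertwiner space, part (a) follows.

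For part (b), I would take the forward map to be $X \mapsto Y_\omega := \tilde P(\{\omega\})X P(\{\omega\})$ and build its inverse exactly as the unitary was built in Theorem~\ref{thm-atomic-repres}: given $Y_\omega\colon \Ran(P(\{\omega\})) \to \Ran(\tilde P(\{\omega\}))$, extend it to $\mathcal H$ so that $Y_\omega = \tilde P(\{\omega\})Y_\omega P(\{\omega\})$, and set
\[
X := \bigoplus_{\gamma \in \text{Orbit}(\omega)} \tilde t_a\,\tilde t_{\omega(0,j)}^*\, Y_\omega\, t_{\omega(0,j)}\, t_a^*, \qquad \gamma = a\,\sigma^j(\omega).
\]
The verification that each summand is independent of the factorization $\gamma = a\sigma^j(\omega)$ and that $X$ intertwines the representations is identical to the corresponding steps of Theorem~\ref{thm-atomic-repres}; the essential observation is that those steps used only boundedness of the central operator together with the relations $Y_\omega = \tilde P(\{\omega\})Y_\omega = Y_\omega P(\{\omega\})$, and never the unitarity of $U_\omega$. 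It then remains to check that the two composites are identities. Beginning with an intertwiner $X$, I would push $X$ to the outside using $\tilde P(\{\omega\})X = XP(\{\omega\})$, $\tilde t_{\omega(0,j)}^* X = X t_{\omega(0,j)}^*$ and $\tilde t_a X = X t_a$, reducing the reconstructed $\gamma$-summand to $X\, t_a t_{\omega(0,j)}^* P(\{\omega\}) t_{\omega(0,j)} t_a^* = X P(\{\gamma\})$ by the two identities of Proposition~\ref{prop:atomic-2}(a); summing over $\gamma$ recovers $X$. Conversely, the choice $a = r(\omega)$, $j = 0$ shows $\tilde P(\{\omega\})X P(\{\omega\})$ returns $Y_\omega$. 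This bookkeeping is the crux of the whole proposition, and the main thing to be careful about is confirming that the long argument of Theorem~\ref{thm-atomic-repres} genuinely survives the replacement of a unitary by an arbitrary bounded $Y_\omega$, together with the strong-operator convergence of the direct sum over a possibly infinite orbit.

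For part (c), I would specialize (b) to $\{\tilde t_\lambda\} = \{t_\lambda\}$, so that the self-intertwiner space is exactly the commutant $\{t_\lambda\}'$ and $Y_\omega$ ranges over all of $B(\Ran(P(\{\omega\})))$. The resulting linear bijection is in fact a unital algebra isomorphism: each self-intertwiner commutes with $P(\{\omega\})$, whence $P(\{\omega\})X_1X_2P(\{\omega\}) = P(\{\omega\})X_1 P(\{\omega\})X_2 P(\{\omega\})$, and the identity operator maps to the identity of $B(\Ran(P(\{\omega\})))$. Thus $\{t_\lambda\}' \cong B(\Ran(P(\{\omega\})))$, and by Schur's lemma the representation is irreducible precisely when this commutant reduces to $\mathbb{C}\,\mathrm{Id}$, i.e.\ precisely when $\dim[\Ran(P(\{\omega\}))] = 1$.
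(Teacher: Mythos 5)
Your proposal is correct, and part (b) follows the paper's own route (forward map $X\mapsto \tilde P(\{\omega\})XP(\{\omega\})$, inverse built from the formula $\tilde t_a\tilde t_{\omega(0,j)}^*Y_\omega t_{\omega(0,j)}t_a^*$, with well-definedness borrowed from the proof of Theorem~\ref{thm-atomic-repres}); you supply more detail than the paper does on why the two composites are mutually inverse, which is a genuine improvement since the paper omits that check. Parts (a) and (c) are handled differently from the paper, and in both cases your route is arguably cleaner. For (a), the paper asserts that neither representation has nontrivial subrepresentations and then invokes Theorem~\ref{thm-atomic-repres}; that assertion is dubious as stated (if $\dim\operatorname{Range}(P(\{\omega\}))>1$ there certainly are proper invariant subspaces), whereas your argument --- every intertwiner kills each atom of $P$ because the corresponding atom of $\tilde P$ vanishes, hence the intertwiner space is zero, hence the representations are disjoint --- is robust and self-contained, modulo the standard fact that disjointness is equivalent to the vanishing of the intertwiner space (which the paper does not record, so it is worth citing or proving the easy implication you need). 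For (c), the paper re-runs the construction of Theorem~\ref{thm-atomic-repres} on unitaries and invokes the Russo--Dye theorem to conclude the commutant contains all of $B(\operatorname{Range}(P(\{\omega\})))$; you instead observe that the bijection of (b), specialized to self-intertwiners, is a unital algebra isomorphism onto $B(\operatorname{Range}(P(\{\omega\})))$ (multiplicativity following from $XP(\{\omega\})=P(\{\omega\})X$), and then apply Schur's lemma. This avoids Russo--Dye entirely and makes (c) a genuine corollary of (b). The one point you rightly flag as needing care --- that the well-definedness argument of Theorem~\ref{thm-atomic-repres} uses only $Y_\omega=\tilde P(\{\omega\})Y_\omega P(\{\omega\})$ and boundedness, never unitarity, and that the direct sum over a possibly infinite orbit converges strongly with norm $\sup_\gamma\|{\cdot}\|=\|Y_\omega\|$ --- does hold, and is exactly the gap the paper papers over with ``we omit the details.''
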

\begin{proof}
{To see (a), note that under our hypotheses,  neither $\{t_\lambda\}_\lambda$ nor $\{\tilde t_\lambda\}_\lambda$ has nontrivial subrepresentations; therefore Theorem \ref{thm-atomic-repres} implies (a).}
For (b), 
given an operator $X: \H \to\H'$ which intertwines the representations, 
the operator $Y_\omega:=  \tilde P(\{ \omega\}) X P(\{\omega\})$
is evidently a well-defined operator from $\Ran (P(\{\omega\}))$ to $\Ran (\tilde P(\{\omega\}))$. 
On the other hand, given an operator $Y_\omega: \Ran (P(\{\omega\})) \to \Ran (\tilde P(\{\omega\}))$, we can  define $X: \H \to \tilde \H$ by setting
\[ X|_{\Ran(P(\{\gamma\}))} = \tilde t_a \tilde  t_{\omega(0,j)}^* Y_\omega t_{\omega(0,j)} t_a^*\]
whenever $\gamma = a\sigma^j(\omega) \in \text{Orbit}(\omega)$. Arguments analogous to those  employed in the proof of Theorem \ref{thm-atomic-repres} will show that $X$ is well defined; we omit the details.

{For (c),} let  ${\mathcal H}=\text{Range}\,P(\{\omega\}),$ and $\tilde{\mathcal H}= \text{Range}\,\tilde{P}(\{\omega\}).$  Recall from Theorem \ref{thm-atomic-repres} above that if ${\mathcal H}$ and $\tilde{\mathcal H}$ have the same dimension,  any unitary $U_{\omega}\in {\mathcal U}({\mathcal H},\tilde{\mathcal H})$ can be used to construct an intertwiner 
$$\bigoplus_{\gamma\in\;\text{Orbit}(\omega)}U_{\gamma}:\;\bigoplus_{\gamma\in\;\text{Orbit}(\omega)}\text{Range}\,P(\{\gamma\})\;\to\; \bigoplus_{\gamma\in\;\text{Orbit}(\omega)}\text{Range}\,\tilde{P}(\{\gamma\}).$$
In  the same way, if $T_{\omega}$ is a finite linear combination of unitary elements in $B({\mathcal H})=B(\text{Range}\,P(\{\omega\})),$ defining for $\gamma=a\sigma^j(\omega)$ the bounded operator 
$$T_{\gamma}=\;t_a t_{\omega(0,j)}^*T_{\omega}t_{\omega(0,j)}t_a^*,$$
Theorem \ref{thm-atomic-repres} shows us that $T_{\gamma}$ is well-defined and that 
$$\bigoplus_{\gamma\in\;\text{Orbit}(\omega)}T_{\gamma}:\;\bigoplus_{\gamma\in\;\text{Orbit}(\omega)}\text{Range}\,P(\{\gamma\})\;\to\; \bigoplus_{\gamma\in\;\text{Orbit}(\omega)}\text{Range}\,P(\{\gamma\})$$
intertwines the representation  $\{t_\lambda\}_{\lambda\in\Lambda}$ with itself.

   Consequently, the set of intertwiners between  $\{t_\lambda\}_{\lambda\in\Lambda}$ and itself contains the span of the unitary elements in 
$B({\mathcal H})=B(\text{Range}\,P(\{\omega\})).$  
However, by Russo-Dye's Theorem, the closure of the span of the unitary elements in $B({\mathcal H})$ is exactly $B({\mathcal H}).$   By Schur's Lemma, our representation is irreducible if and only if the self-intertwiners of   $\{t_\lambda\}_{\lambda\in\Lambda}$ consist solely of scalar multiples of the identity.  But by our preceding construction, we see that this happens if and only if the dimension of ${\mathcal H}=\text{Range}\,P(\{\omega\})$ is equal to $1,$ as desired.
\end{proof}

\subsection{Purely atomic representations versus monic representations}
\label{sec:relation_monic_atomic}

We now discuss the relation between purely atomic representations and monic representations.

Recall first (cf.~\cite[Definition 4.1]{FGJKP-monic}) that a representation $\{ t_\lambda\,:\, \lambda\in\Lambda\}$ of a finite source-free $k$-graph  on a Hilbert space $\H$ is called \emph{monic} if $t_\lambda \not= 0$ for all $\lambda \in \Lambda$, and there exists a vector $\xi\in \mathcal{H}$ such that
 \[
 \overline{\text{span}}_{\lambda \in \Lambda} \{ t_\lambda t_\lambda^* \xi \} = \mathcal{H}.
 \]

According to Theorem~4.2 of \cite{FGJKP-monic}, for a finite source-free $k$-graph $\Lambda$, every monic representations of $C^*(\Lambda)$ is unitarily equivalent to a representation of $C^*(\Lambda)$ on $L^2(\Lambda^\infty, \mu)$ for some Borel measure $\mu$.   
 The next theorem proves that a purely atomic representation of $C^*(\Lambda)$ is monic if and only if for every atom $x\in \Lambda^\infty$, $P(\{x\})$ is one-dimensional. 

\begin{rmk}
 Since $L^2(\Lambda^\infty, \mu)$ is separable for any measure $\mu$ associated to a monic representation, in the setting of Theorem \ref{thm:atomic-1D}
we conclude that the set of atoms for $\mu$ must be countable.
\end{rmk}

\begin{thm}
\label{thm:atomic-1D}
Let $\Lambda$ be a finite $k$-graph with no sources.
Let $\{t_\lambda:\lambda\in \Lambda\}$ be a purely atomic representation of $C^*(\Lambda)$ on a separable Hilbert space $\mathcal{H}$. Suppose that $t_\lambda t^*_\lambda\ne 0$ for all $\lambda\in\Lambda$. Then the representation is monic if and only if for every atom $x\in \Lambda^\infty$, $P(\{x\})$ is one-dimensional. Moreover, in this case the associated measure $\mu$ arising from the monic representation is atomic.

\end{thm}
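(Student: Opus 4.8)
The plan is to work throughout with the identification $P(Z(\lambda)) = t_\lambda t_\lambda^*$ from Equation~\eqref{eq:pv-measure} and with the orthogonal decomposition $\H = \bigoplus_{\omega \in \Omega} \text{Range}(P(\{\omega\}))$ furnished by Definition~\ref{defatomic}(c), where $\Omega = \text{supp}(P)$. Two facts would be used repeatedly. First, since the ranges $\text{Range}(P(\{\omega\}))$ are mutually orthogonal, for any $\xi \in \H$ and any $\lambda \in \Lambda$ there is a norm-convergent orthogonal expansion
\[ P(Z(\lambda))\,\xi = \sum_{\omega \in Z(\lambda) \cap \Omega} P(\{\omega\})\,\xi, \]
because $\sum_{\omega} \|P(\{\omega\})\xi\|^2 \le \|\xi\|^2 < \infty$. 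Second, writing $\mathbf{1} = (1,\dots,1) \in \N^k$, the cylinder sets $Z(\omega(0, N\mathbf{1}))$ decrease to $\{\omega\}$ as $N \to \infty$, so continuity of the projection valued measure $P$ of Theorem~\ref{conj-palle-proj-valued-measure-gen-case} gives $P(Z(\omega(0,N\mathbf{1}))) \to P(\{\omega\})$ in the strong operator topology.

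For the forward implication, suppose the representation is monic with monic vector $\xi$, and write $\xi_\omega := P(\{\omega\})\xi$. By the first displayed identity, every $P(Z(\lambda))\xi$ lies in $\overline{\text{span}}\{\xi_\omega : \omega \in \Omega\}$, whence
\[ \H = \overline{\text{span}}_{\lambda\in\Lambda}\{P(Z(\lambda))\xi\} \subseteq \overline{\text{span}}\{\xi_\omega : \omega \in \Omega\}. \]
I would then fix $\omega_0 \in \Omega$ and apply the continuous projection $P(\{\omega_0\})$. Since $P(\{\omega_0\})\xi_\omega = \delta_{\omega,\omega_0}\,\xi_{\omega_0}$, the image $P(\{\omega_0\})\bigl(\overline{\text{span}}\{\xi_\omega\}\bigr)$ is contained in the closed line $\C\,\xi_{\omega_0}$; as the left-hand side equals $\text{Range}(P(\{\omega_0\}))$, this forces $\text{Range}(P(\{\omega_0\})) = \C\,\xi_{\omega_0}$ to be one-dimensional (and in particular $\xi_{\omega_0} \ne 0$).

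For the converse, assume every atom is one-dimensional. Separability of $\H$ together with Definition~\ref{defatomic}(c) forces $\Omega$ to be countable, say $\Omega = \{\omega_1, \omega_2, \dots\}$; I would choose a unit vector $e_n$ spanning $\text{Range}(P(\{\omega_n\}))$, so that $\{e_n\}$ is an orthonormal basis of $\H$, and set $\xi := \sum_n 2^{-n} e_n \in \H$. For each fixed $n$, the second fact above gives $P(Z(\omega_n(0,N\mathbf{1})))\xi \to P(\{\omega_n\})\xi = 2^{-n} e_n$ in norm as $N \to \infty$; since each $P(Z(\omega_n(0,N\mathbf{1})))\xi$ belongs to $\overline{\text{span}}_{\lambda}\{P(Z(\lambda))\xi\}$, so does its limit, and hence so does $e_n$. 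As $\{e_n\}$ is an orthonormal basis, $\overline{\text{span}}_{\lambda}\{P(Z(\lambda))\xi\} = \H$, which is exactly the monic condition. For the ``moreover'' statement, the measure $\mu$ attached to the monic vector $\xi$ in the construction behind Theorem~4.2 of \cite{FGJKP-monic} is the scalar measure $\mu(B) = \langle P(B)\xi, \xi\rangle$; with $\xi = \sum_n 2^{-n} e_n$ and the $e_n$ orthonormal, this is $\mu = \sum_n 4^{-n}\,\delta_{\omega_n}$, which is purely atomic.

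The step I expect to require the most care is the forward implication, where the content is to \emph{decouple} the atoms: one must convert the single global spanning condition defining monicity into a statement about each individual atomic projection. The orthogonality of the ranges $\text{Range}(P(\{\omega\}))$ and the continuity of each projection $P(\{\omega_0\})$ are precisely what make this decoupling go through, since they let one read off the dimension of a single summand from the image of the whole spanning set; everything else reduces to bookkeeping with strong-operator limits of shrinking cylinder sets.
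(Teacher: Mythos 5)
Your proof is correct, and your converse direction coincides with the paper's: countability of $\Omega$ from separability, the vector $\xi=\sum_n 2^{-n}e_n$, and recovery of each $e_n$ as a norm limit of $P(Z(\omega_n(0,N\mathbf{1})))\xi$. The forward direction, however, takes a genuinely different and considerably more elementary route. The paper argues by contradiction: given a strict subprojection $Q_\omega\le P(\{\omega\})$, it propagates it along the orbit via $Q_\gamma=t_at_{\omega(0,j)}^*Q_\omega t_{\omega(0,j)}t_a^*$, carries out a fairly delicate computation with $\Lambda^{\min}$ to show that $Q=\bigoplus_\gamma Q_\gamma$ commutes with every $t_\eta$, and then invokes Theorems~3.13 and~4.2 of \cite{FGJKP-monic} (the realization of a monic representation on $L^2(\Lambda^\infty,\mu)$ and the identification of commuting operators with multiplication operators) to reach a contradiction. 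You instead expand $P(Z(\lambda))\xi=\sum_{\omega\in Z(\lambda)\cap\Omega}P(\{\omega\})\xi$ --- which follows by applying $P(Z(\lambda))$ to condition (c) of Definition~\ref{defatomic} and using $P(A)P(B)=P(A\cap B)$ --- and then compress the resulting spanning set by the single projection $P(\{\omega_0\})$, which annihilates every summand except $\xi_{\omega_0}$. This decoupling uses only the mutual orthogonality of the atomic projections and bypasses the monic structure theory of \cite{FGJKP-monic} entirely; in particular it does not need $\Lambda$ to be finite, only the spanning condition itself. What the paper's route buys in exchange is the explicit model $\H\cong L^2(\Lambda^\infty,\mu)$, through which it phrases the ``moreover'' clause; you verify that clause only for the particular cyclic vector built in the converse, but the same computation $\mu(B)=\langle P(B)\xi,\xi\rangle=\sum_{\omega\in B\cap\Omega}\|P(\{\omega\})\xi\|^2$ gives atomicity of $\mu$ for an arbitrary monic vector, so nothing essential is missing.
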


\begin{proof}
Suppose that the given purely atomic representation $\{t_\lambda:\lambda\in \Lambda\}$ on $\mathcal{H}$ is monic, with cyclic vector $\xi$ for $\{ t_\lambda t_\lambda^*\}_{\lambda \in \Lambda}$. Then by Theorem~4.2 of \cite{FGJKP-monic} we can assume that $\mathcal{H}$ is of the form $L^2(\Lambda^{\infty},\mu)$, where the measure $\mu$ is given by the projection valued measure $P$ determined by the representation, i.e. $\mu(Z(\lambda))=\langle P(Z(\lambda)) \xi, \xi \rangle = \| t_\lambda^* \xi  \|^2$ for $\lambda\in\Lambda$.
Since $\{t_\lambda\}_\lambda$ is purely atomic, $\mu(\{\omega\}) = \|P(\{\omega\})\xi\|^2$ is nonzero iff $\omega \in \Omega$.  In other words, the atoms of $\mu$ are precisely the atoms of $P$.

To show that $P(\{\omega\})$ is always a rank-one projection for an atom $\omega$, we argue by contradiction.  Suppose that there exists $\omega \in \Lambda^\infty$ and a strict subprojection $Q_\omega \leq P(\{\omega\})$ with $Q_\omega \not= P(\{\omega\})$.  For any $\gamma = a \sigma^j(\omega) \in \text{Orbit}(\omega)$, write 
\[ Q_\gamma = t_a t_{\omega(0,j)}^* Q_\omega t_{\omega(0,j)} t_a^*,\] and set $Q = \bigoplus_{\gamma \in \text{Orbit}(\omega)} Q_\gamma$.

The fact that the projections $P(\{\gamma\})$ are mutually orthogonal  implies that $Q$ is indeed a sum of orthogonal projections.  Moreover, Proposition \ref{prop:atomic-2} implies that
each summand $Q_\gamma$ is a strict subprojection of $P(\{\gamma\})$.

We will show that $t_\eta Q  = Qt_\eta$ for all $\eta \in \Lambda$.
Since $\{t_\lambda\}_{\lambda \in \Lambda}$ is monic by assumption,   Theorem~3.13 and Theorem~4.2 of \cite{FGJKP-monic} will then imply that $Q$ must be a multiplication operator, which contradicts the fact that each $Q_\gamma$ is a strict subprojection of $P(\{\gamma\})$.

Fix $ \eta \in \Lambda$ and $\gamma = a \sigma^j(\omega)$.  As in the proof of Proposition \ref{prop-atomic-basic-equns}, 
\[ Q_\gamma  t_\eta = t_a t_{\omega(0,j)}^*  Q_\omega t_{\omega(0,j)} t_a^* t_\eta =\sum_{(\rho, \zeta) \in \Lambda^{\min}(a, \eta)} t_a t_{\omega(0,j)}^*  Q_\omega t_{\omega(0,j)} t_\rho t_\zeta^*.\]
By Proposition \ref{prop:atomic-2}, $t_{\omega(0,j)}^* Q_\omega t_{\omega(0,j)} = Q_{\sigma^j(\omega)} = Q_{\sigma^j(\omega)} P(Z(\omega(j, j +d(\rho)))),$ and $t_{\omega(j, j+d(\rho))}^* t_\rho =0$ unless $\rho = \omega(j, j+d(\rho))$.  Thus, the sum collapses to (at most) a single term:  Writing $m = d(\rho) = d(a) \vee d(\eta) - d(a)$, 
\[ Q_\gamma t_\eta = \begin{cases}
t_a Q_{\sigma^j(\omega)} t_{\omega(j, j+m)} t_\zeta^*,& \eta \zeta = a \omega(j, j+m)\\
0 ,& \eta \zeta \not= a \omega(j, j+m)
\end{cases}\]
Now, using the fact that $Q_{\sigma^j(\omega)} = t_{\omega(j, j+m)} t_{\omega(j,j+m)}^* Q_{\sigma^j(\omega)}$, we obtain that if $Q_\gamma t_\eta \not= 0$, 
\[ Q_\gamma t_\eta = t_{\eta \zeta} t_{\omega(j,j+m)}^* Q_{\sigma^j(\omega)} t_{\omega(j,j+m)} t_\zeta^* = t_\eta Q_{\zeta \sigma^{m+j}(\omega)}.\]
For each fixed $\eta$, the map 
\[a \sigma^j(\omega) \mapsto\zeta \sigma^{m+j}(\omega), \quad \text{ where }a \omega(j,m+j) =\eta \zeta,\]
is a bijection from $\{\gamma \in \text{Orbit}(\omega): Q_\gamma t_\eta \not= 0\}$ to $\{ \tilde \gamma \in \text{Orbit}(\omega): t_\eta Q_{\tilde \gamma} \not= 0\}$. (Surjectivity follows by observing that, given $\eta \in \Lambda$ and $\tilde\gamma = \zeta \sigma^{q}(\omega)$ with $s(\eta) = r(\zeta)$, we can take $a = \eta \zeta, j = q$ to construct the preimage $\gamma$ of $\tilde \gamma$.)

  It now follows that, as claimed,
\[ Q t_\eta = t_\eta Q.\]

Conversely, suppose that $\{t_\lambda:\lambda\in \Lambda\}$ is a purely atomic representation of $C^*(\Lambda)$ on a separable Hilbert space $\mathcal{H}$ such that for every atom $x\in \Lambda^{\infty},\;P(\{x\})\H$ is one-dimensional. Let $\Omega\subset \Lambda^{\infty}$ be the support of the associated projection valued measure $P$ on $\Lambda^{\infty}.$ Since $\mathcal{H}$ is separable and since $P(\{x\})\H$ is orthogonal to $P(\{y\})\H$ for $x \not= y\in \Omega,$ we must have that  $\Omega$ is countable; let us enumerate $\Omega\;=\;\{\omega_n\}_{n=1}^{\infty}.$ Then
$$\sum_{n=1}^{\infty}P(\{\omega_n\}) = Id_{\mathcal{H}},$$
where the convergence is in the strong operator topology.
 For each $n\in\mathbb N,$ choose a unit vector $e_n\in P(\{\omega_n\})\H.$ 
Define $\xi\in\H$ by
 $$\xi=\sum_{n=1}^{\infty}\frac{e_n}{2^n}.$$
 We note that $P(\{\omega_n\})(\xi)\;=\;\frac{e_n}{2^n}.$
 It follows that for each $n\in\mathbb N,$
 $$e_n\in\;\overline{\text{span}}\{t_{\lambda}t_{\lambda}^*(\xi)=P(Z(\lambda)(\xi):\;\lambda\in \Lambda\}.$$
This is due to the fact that for each $n\in\mathbb N,$ 
 $$\lim_{j\to \infty}t_{\omega_n(0,j)}t_{\omega_n(0,j)}^*(\xi)\;=\;\lim_{j\to \infty}P(Z(\omega(0,j)))(\xi)$$
 $$=\;P(\{\omega_n\})(\xi)\;=\;\frac{e_n}{2^n}.$$
 
Therefore $\xi$ is a cyclic vector for $\{t_{\lambda}t_{\lambda}^*: \;\lambda\in \Lambda\},$ so that this representation is monic.

\end{proof}

\section{Permutative representations of $C^*(\Lambda)$}
\label{sec:permutative_repn}


Here we  study permutative   representations of $C^*(\Lambda)$.  These are similar, but not precisely equivalent, to the atomic representations of single-vertex $k$-graphs studied by Davidson, Power and Yang in \cite{dav-pow-yan-atomic}.  In particular, the atomic representations of \cite{dav-pow-yan-atomic} permit for a rescaling, in addition to a permutation, of the basis vectors of the Hilbert space.   We refer the reader to that paper and the references therein for more details.

\subsection{Definition and first properties}

 \begin{defn} 
 \label{defpermutative}
(c.f. Definition 4.9 of \cite{dutkay-jorgensen-atomic}.) 
 Let $\Lambda$ be a row-finite $k$-graph with no sources. 
A representation $\{t_\lambda\}_{\lambda \in \Lambda}$ of $C^*(\Lambda)$ on a Hilbert space ${\mathcal H}$ is called {\it permutative} if ${\mathcal H}$ has an orthonormal basis $\{e_i: i\in I\}$ for some index set $I$ such that for each $\lambda\in \Lambda$ there are subsets $J_{\lambda}$ and $K_{\lambda}$ of $I$ and a bijection 
 $\tilde{\sigma}_{\lambda}:J_{\lambda}\to K_{\lambda}$ satisfying
\begin{enumerate}
 \item[(a)] For each $n\in\mathbb N^k,\;\cup_{\lambda\in \Lambda^n}J_{\lambda}\;=\;\cup_{\lambda\in \Lambda^n}K_{\lambda}=I;$
\item[(b)] For each $\lambda\in \Lambda$ and $\nu\in s(\lambda)
\Lambda,$ we have $K_{\nu}\subset J_{\lambda}$ and $\tilde{\sigma}_{\lambda}\circ \tilde{\sigma}_{\nu}=\tilde{\sigma}_{\lambda\nu}$. (This implies $J_{\lambda\nu}=J_{\nu}$ whenever $s(\lambda)=r(\nu)$).
 \item[(c)] $t_{\lambda}(e_i)\;=\;e_{\tilde{\sigma}_{\lambda}(i)}$ for $i\in J_{\lambda},$ and $t_{\lambda}(e_i)=0,$ for $i\notin J_{\lambda}.$
 \item[(d)] $t_{\lambda}^{\ast}(e_{\tilde{\sigma}_{\lambda}(i)})=e_{i}$ for $i\in J_{\lambda},$ and $t_\lambda^*(e_j) = 0$ for $j \in K_{\lambda'}$, if $\lambda \not= \lambda'$ but $d(\lambda) = d(\lambda')$.
 \end{enumerate}

 \end{defn}

An attentive reader may notice a similarity between Definition \ref{defpermutative} and the definition of a $\Lambda$-semibranching function system (\cite{FGKP} Definition 3.2; see also Theorem 3.1 of  \cite{FGJKP-SBFS} for an equivalent formulation).  Intuitively, a $\Lambda$-semibranching function system is a ``representation'' of $C^*(\Lambda)$ on a measure space $(X, \mu)$; it consists of a family of partially defined measurable maps $\{\tau_\lambda: D_{s(\lambda)} \to X\}_{\lambda \in \Lambda}$ whose range sets $R_\lambda := \tau_\lambda(D_{s(\lambda)})$ satisfy measure-theoretic analogues of the Cuntz--Krieger relations.  
We formalize the connection between permutative representations and $\Lambda$-semibranching function systems in Section \ref{sec:perm-and-sbfs} below.

We first prove:
\begin{lemma}  
 Let $\Lambda$ be a row-finite $k$-graph with no sources.
Let $\{ t_\lambda\}_{\lambda \in \Lambda}$ be a permutative representation of $C^*(\Lambda)$ on a Hilbert space ${\mathcal H},$  and let $\{J_{\lambda}\}_{\lambda\in\Lambda}$ and $\{K_{\lambda}\}_{\lambda\in\Lambda}$ be as in Definition~\ref{defpermutative}.  For any  $n\in\mathbb N^k,$ if $\lambda,\;\lambda'\in \Lambda^n$ and $\lambda\not=\lambda',$ then we have 
$K_{\lambda}\cap K_{\lambda'}=\emptyset.$
\label{lem:permutative}

\end{lemma}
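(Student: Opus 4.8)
The plan is to argue by contradiction, exhibiting a single common index $j \in K_\lambda \cap K_{\lambda'}$ that forces $t_\lambda^*(e_j)$ to take two incompatible values. Suppose, toward a contradiction, that there exists $j \in K_\lambda \cap K_{\lambda'}$ for some $\lambda \neq \lambda'$ in $\Lambda^n$. Since $\tilde\sigma_\lambda : J_\lambda \to K_\lambda$ is a bijection and $j \in K_\lambda$, there is a unique $i \in J_\lambda$ with $\tilde\sigma_\lambda(i) = j$; the first clause of Definition~\ref{defpermutative}(d) then gives $t_\lambda^*(e_j) = t_\lambda^*(e_{\tilde\sigma_\lambda(i)}) = e_i \neq 0$. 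On the other hand, $j \in K_{\lambda'}$ with $\lambda' \neq \lambda$ and $d(\lambda') = d(\lambda)$, so the second clause of Definition~\ref{defpermutative}(d) applies verbatim and yields $t_\lambda^*(e_j) = 0$. These two statements contradict one another, so no such $j$ exists; that is, $K_\lambda \cap K_{\lambda'} = \emptyset$.

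For completeness, and to tie the index-set combinatorics back to the underlying Cuntz--Krieger structure, I would also record the more structural route. One first checks that $K_\lambda$ indexes an orthonormal basis of $\Ran(t_\lambda t_\lambda^*)$, since by parts (c) and (d) the projection $t_\lambda t_\lambda^*$ is exactly the orthogonal projection onto $\clsp\{e_{\tilde\sigma_\lambda(i)} : i \in J_\lambda\} = \clsp\{e_j : j \in K_\lambda\}$. Next, for distinct $\lambda, \lambda' \in \Lambda^n$ one has $\Lambda^{\min}(\lambda,\lambda') = \emptyset$, so Equation~\eqref{eq:CK4-2} gives $t_\lambda^* t_{\lambda'} = 0$, whence the projections $t_\lambda t_\lambda^*$ and $t_{\lambda'} t_{\lambda'}^*$ have orthogonal ranges. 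Disjointness of $K_\lambda$ and $K_{\lambda'}$ then follows because two subspaces spanned by distinct blocks of a single orthonormal basis are orthogonal precisely when their index sets are disjoint.

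I do not expect any serious obstacle here: the conclusion is essentially immediate from the axioms of a permutative representation. The only point meriting a line of justification in the structural version is the emptiness of $\Lambda^{\min}(\lambda,\lambda')$ for distinct $\lambda, \lambda'$ of the same degree, which holds because any $(\alpha,\beta) \in \Lambda^{\min}(\lambda,\lambda')$ would satisfy $d(\lambda\alpha) = d(\lambda)\vee d(\lambda') = n$, forcing $d(\alpha) = d(\beta) = 0$, and hence $\lambda = \lambda\alpha = \lambda'\beta = \lambda'$, contrary to $\lambda \neq \lambda'$. I would present the short contradiction argument via Definition~\ref{defpermutative}(d) as the main proof and mention the Cuntz--Krieger viewpoint as a remark.
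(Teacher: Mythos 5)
Your main argument is correct, and it reaches the contradiction by a slightly different (and even more elementary) route than the paper. The paper also fixes a common index $j=\tilde{\sigma}_\lambda(i)=\tilde{\sigma}_{\lambda'}(k)$, but it derives the contradiction from the Cuntz--Krieger relation $t_{\lambda'}^*t_\lambda=\delta_{\lambda,\lambda'}t_{s(\lambda)}$: computing $t_{\lambda'}^*t_\lambda(e_i)=t_{\lambda'}^*(e_j)=e_k\neq 0$ via the first clause of Definition~\ref{defpermutative}(d) contradicts $t_{\lambda'}^*t_\lambda=0$. You instead never invoke the $C^*$-relations at all: you apply the two clauses of Definition~\ref{defpermutative}(d) to the single vector $e_j$ and observe they assign it the incompatible values $e_i$ and $0$. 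Both arguments are one-line contradictions hinging on a common index; yours shows the disjointness of the $K_\lambda$ is already forced by the internal consistency of the definition of a permutative representation, while the paper's makes visible how the combinatorics of the index sets mirrors the orthogonality $t_{\lambda'}^*t_\lambda=0$ of the partial isometries. Your supplementary ``structural'' remark (that $t_\lambda t_\lambda^*$ projects onto $\clsp\{e_j:j\in K_\lambda\}$ and that $\Lambda^{\operatorname{min}}(\lambda,\lambda')=\emptyset$ for distinct $\lambda,\lambda'$ of equal degree) is also correct and is in fact closer in spirit to the paper's proof; just note that verifying $t_\lambda t_\lambda^* e_j=0$ for $j\notin K_\lambda$ uses Definition~\ref{defpermutative}(a) to place $j$ in some $K_{\lambda'}$ with $\lambda'\neq\lambda$ of the same degree, a small step worth making explicit so the remark does not appear to presuppose the lemma.
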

\begin{proof} We recall if $\lambda,\lambda'\in\Lambda^n$ then 
$$t_{\lambda'}^{\ast}t_{\lambda}=\delta_{\lambda',\lambda}t_{s(\lambda)}.$$
So, for $\lambda,\lambda'\in\Lambda^n$ with $\lambda\not=\lambda',$ if there exists $j\in K_{\lambda}\cap K_{\lambda'},$ we could find $i\in J_{\lambda}$ with $\tilde{\sigma}_{\lambda}(i)=j,$ and $k\in J_{\lambda'}$ with $\tilde{\sigma}_{\lambda'}(k)=j.$
But then by definition of permutative representation, we would have 
$$t_{\lambda'}^{\ast}t_{\lambda}(e_i)\;=\;t_{\lambda'}^{\ast}(e_{\tilde{\sigma}_{\lambda}(i)})=t_{\lambda'}^{\ast}(e_j)\;=\;t_{\lambda'}^{\ast}(e_{\tilde{\sigma}_{\lambda'}}(k))=e_k\not=0.$$
But this contradicts the fact that $t_{\lambda'}^{\ast}t_{\lambda}=0$ for $\lambda\ne \lambda'$, so we must have $K_{\lambda}\cap K_{\lambda'}=\emptyset.$
\end{proof}
 We define the {\it  encoding map} $E$ from the index set $I$ into $\Lambda^{\infty}$
by 
\begin{equation}\label{eq:encoding}
E(i)((0,n))=\lambda,\;\text{where}\;\lambda\;\text{is the unique element of}\;\Lambda^n\;\text{such that}\; i \;\in\:K_{\lambda}.
\end{equation}
To see that $E(i) \in \Lambda^\infty$ is well defined, it suffices to check that if $m \geq n$ and $\lambda = E(i)((0,n))$, so that $i \in K_\lambda,$  then $E(i)((0,m)) = \lambda \nu$ for some $\nu \in \Lambda$.  Thus, 
suppose that $m\geq n\in \mathbb N^k.$
Write
$E(i)((0,m))=\mu.$  Then there exists a unique $j'\in J_{\mu}$ with $\tilde{\sigma}_{\mu}(j')=i\in K_{\mu}.$

Write $\mu = \lambda' \nu'$ where $d(\lambda') = n$.  We will show that $\lambda' = \lambda$.
Since $J_{\mu}=J_{\lambda'\nu'}\;=J_{\nu'}$ and 
$$\tilde{\sigma}_{\lambda'\nu'}\;=\tilde{\sigma}_{\lambda'}\circ \tilde{\sigma}_{\nu'},$$
we obtain $i = \tilde{\sigma}_{\lambda'} ( \tilde{\sigma}_{\nu'}(j')) \in K_{\lambda'}$.  Lemma \ref{lem:permutative} now implies that $\lambda = \lambda'$.

\begin{prop}\label{prop:encoding-map}
Let $\Lambda$ be a row-finite $k$-graph with no sources and let $I$ be an index set associated to a permutative representation  $\{ t_\lambda\}_{\lambda \in \Lambda}$  of $C^*(\Lambda)$.  Suppose that $E:I\to \Lambda^{\infty}$ is the encoding map given in \eqref{eq:encoding}. Then we have the following.
\begin{itemize}
\item[(a)]For each $i\in I$ and $n\in\mathbb N^k$, there is a unique $\lambda\in \Lambda^n$ and $i_n\in\;J_{\lambda}\subset I$ such that  $\tilde{\sigma}_{\lambda}(i_n)=i.$  Writing $\tilde{\sigma}^n(i)=i_n,$  we have $\tilde{\sigma}_{\lambda}\circ \tilde{\sigma}^n(i)=i$ for all $i\in K_{\lambda},$ and $\tilde{\sigma}^n\circ \tilde{\sigma}_{\lambda}(i)=i$ for all $i\in J_{\lambda}.$  

\item[(b)] The map $E:I\to\Lambda^{\infty}$ defined above satisfies 
\[
\sigma_{\lambda}(E(i))\;=\;E(\tilde{\sigma}_{\lambda}(i))\quad \text{for $i\in J_{\lambda}$},\quad\text{and}\quad \sigma^n(E(i))\;=\;E(\tilde{\sigma}^n(i))\quad \text{for $i\in I$}.
\]
\end{itemize}
\end{prop}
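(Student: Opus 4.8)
The plan is to treat part (a) as direct bookkeeping from the defining axioms, and then to build part (b) on top of it by verifying the two intertwining identities one initial segment at a time.

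For part (a), I would first invoke Definition~\ref{defpermutative}(a), which gives $\bigcup_{\lambda \in \Lambda^n} K_\lambda = I$, to produce some $\lambda \in \Lambda^n$ with $i \in K_\lambda$; Lemma~\ref{lem:permutative} guarantees this $\lambda$ is unique, since the sets $K_\lambda$ for $\lambda \in \Lambda^n$ are pairwise disjoint. As $\tilde\sigma_\lambda : J_\lambda \to K_\lambda$ is a bijection, the element $i_n := \tilde\sigma_\lambda^{-1}(i) \in J_\lambda$ is the unique preimage of $i$, establishing existence and uniqueness of the pair $(\lambda, i_n)$. The two composition identities are then immediate: for $i \in K_\lambda$ the unique $\mu \in \Lambda^n$ with $i \in K_\mu$ is $\lambda$ itself, so $\tilde\sigma^n(i) = \tilde\sigma_\lambda^{-1}(i)$ and $\tilde\sigma_\lambda(\tilde\sigma^n(i)) = i$; and for $i \in J_\lambda$ we have $\tilde\sigma_\lambda(i) \in K_\lambda$, whence $\tilde\sigma^n(\tilde\sigma_\lambda(i)) = \tilde\sigma_\lambda^{-1}(\tilde\sigma_\lambda(i)) = i$.

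Before part (b) I would record a preliminary fact that makes $\sigma_\lambda(E(i))$ meaningful: each basis vector carries a well-defined vertex. Applying Definition~\ref{defpermutative} to the degree-$0$ paths (vertices) and using that each $t_v$ is a projection (CK1) shows $\tilde\sigma_v = \id$ and $J_v = K_v$, and that the sets $\{K_v\}_{v \in \Lambda^0}$ partition $I$; write $v(i)$ for the unique vertex with $i \in K_{v(i)}$. Since $t_\lambda^* t_\lambda = t_{s(\lambda)}$ is a single vertex projection, for $i \in J_\lambda$ we get $t_{s(\lambda)} e_i = t_\lambda^* e_{\tilde\sigma_\lambda(i)} = e_i$, giving the clean equivalence $i \in J_\lambda \iff s(\lambda) = v(i)$; and since $t_\lambda t_\lambda^* \le t_{r(\lambda)}$ one gets $i \in K_\lambda \Rightarrow r(\lambda) = v(i)$. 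In particular $r(E(i)) = v(i)$, so for $i \in J_\lambda$ we have $r(E(i)) = s(\lambda)$ and $E(i) \in s(\lambda)\Lambda^\infty$, so that $\sigma_\lambda(E(i)) = \lambda E(i)$ is defined.

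For part (b), I would use the standard fact that two infinite paths coincide once their initial segments agree on the cofinal family of degrees $N{\bf 1}$, $N \in \N$ (since $x(0,n)$ is recovered from $x(0,N{\bf 1})$ by factorization whenever $N{\bf 1} \ge n$); restricting to $N{\bf 1} > d(\lambda)$ reduces the prefixing map \eqref{eq:prefixing} to honest concatenation. For the first identity, setting $p = d(\lambda)$ I compute $\sigma_\lambda(E(i))(0,N{\bf 1}) = \lambda \cdot E(i)(0, N{\bf 1}-p) = \lambda\eta$, where $\eta$ is the unique element of $\Lambda^{N{\bf 1}-p}$ with $i \in K_\eta$. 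Writing $i = \tilde\sigma_\eta(j)$ and applying the cocycle identity $\tilde\sigma_{\lambda\eta} = \tilde\sigma_\lambda \circ \tilde\sigma_\eta$ of Definition~\ref{defpermutative}(b) gives $\tilde\sigma_\lambda(i) = \tilde\sigma_{\lambda\eta}(j) \in K_{\lambda\eta}$, so uniqueness from part (a) forces $E(\tilde\sigma_\lambda(i))(0,N{\bf 1}) = \lambda\eta$, as needed. The shift identity is similar and holds for every $m$: from $\sigma^n(E(i))(0,m) = E(i)(n,n+m)$, I factor the unique $\mu \in \Lambda^{n+m}$ with $i \in K_\mu$ as $\mu = \mu(0,n)\,\mu(n,n+m)$, and the cocycle identity together with injectivity of $\tilde\sigma_{\mu(0,n)}$ identifies $\tilde\sigma^n(i)$ with a preimage lying in $K_{\mu(n,n+m)}$, whence $E(\tilde\sigma^n(i))(0,m) = \mu(n,n+m) = E(i)(n,n+m)$.

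The bookkeeping of part (a) and the cocycle manipulations are routine; the step I expect to require the most care is the vertex-labeling preliminary, which is what guarantees that $E(i) \in s(\lambda)\Lambda^\infty$ so that $\sigma_\lambda(E(i))$ is even defined, and that every concatenation $\lambda\eta$ produced in the computation is genuinely composable. Getting this structural fact in place cleanly, rather than the subsequent initial-segment matching, is the real content of the argument.
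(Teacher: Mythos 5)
Your proof is correct and takes essentially the same route as the paper: part (a) is the same bookkeeping with the partition $I=\bigsqcup_{\lambda\in\Lambda^n}K_\lambda$, and part (b) matches initial segments of the two infinite paths using the cocycle identity $\tilde{\sigma}_\lambda\circ\tilde{\sigma}_\nu=\tilde{\sigma}_{\lambda\nu}$ together with the uniqueness of the degree-$n$ path whose $K$-set contains a given index, after first checking (as the paper also does) that $J_\lambda=J_{s(\lambda)}$ so that $\sigma_\lambda(E(i))$ is defined. The only organizational differences --- comparing segments along the cofinal family $N\mathbf{1}$ rather than at $m\vee d(\lambda)$, and proving the shift identity directly rather than deducing it from the prefixing identity by applying $\sigma^n$ to both sides --- do not change the substance.
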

\begin{proof} For (a), notice that for $n\in \mathbb N^k,$ we have $I=\bigsqcup_{\lambda\in \Lambda^n}K_{\lambda},$ 
so that fixing $i\in I,$  there is a unique $\lambda\in \Lambda^n$ such that  $i\in K_{\lambda}.$ Since $\tilde{\sigma}_{\lambda}$ is a bijection from $J_{\lambda}$ to $K_{\lambda},$ there is a unique $i_n\in J_{\lambda}$ such that $\tilde{\sigma}_{\lambda}(i_n)=i$. 
Also, by definition of $\tilde{\sigma}^n,$ we have that  $\tilde{\sigma}_{\lambda}\circ \tilde{\sigma}^n(i)\;=\;\tilde{\sigma}_{\lambda}(i_n)=i$ for $i\in K_{\lambda},$ and  similarly 
$\tilde{\sigma}^n\circ \tilde{\sigma}_{\lambda}(i)=i,$ for all $i\in J_{\lambda}.$

For (b), recall $ \omega\in \Lambda^{\infty}$ is in the domain of $\sigma_{\lambda}$ if and only if $r(\omega)=s(\lambda).$  So recalling that $E(i)((0,0))$ is the unique $v\in \Lambda^0$ such that $i\in J_v,$ we will have $\sigma_{\lambda}(E(i))$ is defined if and only if $s(\lambda)=E(i)((0,0)),$ i.e.~$i\in J_{s(\lambda)}=J_{r(E(i))}.$ 

Recall $\tilde{\sigma}_{\lambda}(i)$ is defined only when $i\in J_{\lambda},$  and that Condition (b) of  Definition \ref{defpermutative} implies that  $J_{\lambda}=J_{s(\lambda)}\subset I.$  Also,  if $i\in J_{\lambda},$ we have $\tilde{\sigma}_{\lambda}(i)\in K_{\lambda},$ and then
$$E( \tilde{\sigma}_{\lambda}(i))((0,d(\lambda)))=\lambda.$$
  On the other hand, for any $\omega \in s(\lambda)\Lambda^\infty$, we have $\sigma_\lambda(\omega) (0, d(\lambda)) = \lambda$. In particular,
$$\sigma_{\lambda}( E(i))((0,d(\lambda)))=\;\lambda.$$
 We thus can see if $n'\in \mathbb N^k$ and $n'\leq d(\lambda),$
$$\sigma_{\lambda}(E(i))((0,n'))=E(\tilde{\sigma}_{\lambda}(i))((0,n'))=\lambda(0, n').$$

Now suppose $m\in\mathbb N^k,\;m\not=n;= d(\lambda).$  Let $\ell=m\vee n$, the coordinatewise maximum of $m$ and $n$.	
Then $n\leq \ell$ and $m\leq \ell.$  Find $n',\;m'\in \mathbb N^k$ such that $n+n'=m+m'=\ell.$ 
Write $\eta :=\sigma_{\lambda}(E(i))((0,\ell));$
by the factorization property, we can find $\lambda',\gamma, \gamma'\in \Lambda$ with $d(\lambda')=n',\;d(\gamma)=m,$ and $d(\gamma')=\; m'$ such that 
$\eta\;=\;\lambda\lambda'\;=\;\gamma\gamma'.$
Then
$$\eta = \sigma_{\lambda}(E(i))((0,\ell))=\sigma_{\lambda}(E(i))((0,n+n'))= \lambda \, E(i)((0, n')).$$
Similarly, $E(i)((0, n')) =\lambda'$ and $\sigma_{\lambda}(E(i))((0,m))\;=\;\gamma$.
Now, we observe that
$$E(\tilde{\sigma}_{\lambda}(i))((0,\ell))=E( \tilde{\sigma}_{\lambda}(i))((0,n+n'))=\lambda \lambda'=\eta =\gamma \gamma’ = E(\tilde \sigma_\lambda(i)) ((0, m+m’)).$$
To see this, we observe that $\eta$ is the unique element of $\Lambda^\ell$ such that $\tilde \sigma_\lambda(i) \in K_\eta = K_{\lambda \lambda’};$ this follows from the fact that $\tilde \sigma_{\lambda’} \circ \tilde \sigma_\lambda = \tilde \sigma_\eta.$ In other words,
$$( \tilde{\sigma}_{\lambda}(i))((0,\ell))\;=\; E( \tilde{\sigma}_{\eta}(i'))((0,\ell))\;=\;\eta.$$
Since $\eta=\gamma \gamma'$ with $d(\gamma)=m,$ we also have  
$$E(\tilde{\sigma}_{\lambda}(i))((0,m))\;=\; E( \tilde{\sigma}_{\eta}(i'))((0,m))\;=\gamma.$$
It follows that for all $m\in \mathbb N^k,$
$$\sigma_{\lambda}(E(i))((0,m))\;=\;\gamma\;=\;E( \tilde{\sigma}_{\lambda}(i))((0,m)),$$
proving the first equality of (b).

From this, we will deduce the second equality.
Let $i\in I,\; n\in \mathbb N^k,$ and suppose that $E(i)((0,n))=\lambda\in \Lambda^n.$
Then setting $i_n=\tilde{\sigma}^n(i),$ we have $i_n\in J_{\lambda},\;i=\; \tilde{\sigma}_{\lambda}\circ \tilde{\sigma}^n(i)\;=\;\tilde{\sigma}_{\lambda}(i_n)\;\in K_{\lambda}.$ Moreover, the first equality of (b) gives
$$\sigma_{\lambda}( E(i_n))\;=\;E(\tilde{\sigma}_{\lambda}(i_n)).$$
We now apply $\sigma^n$ to both sides of this equation to obtain
$$\sigma^n\circ\;\sigma_{\lambda}\circ E(i_n)\;=\;\sigma^n\circ E\circ \tilde{\sigma}_{\lambda}(i_n),$$
and consequently 
$E\circ \tilde{\sigma}^n(i)\;=\;\sigma^n\circ E(i).$
\end{proof}
When $E$ is injective, we obtain the following corollary.
\begin{cor}
Let $\Lambda$ be a row-finite $k$-graph with no sources.
 Let $\{ S_\lambda\}$ be a permutative representation of $C^*(\Lambda)$ on a Hilbert space ${\mathcal H},$ and let $\{e_i:i\in I\}$ be the ``permuted" basis for ${\mathcal H}.$  Let $E:I\to \Lambda^{\infty}$ be the encoding map of Equation \eqref{eq:encoding}.  Then if $E$ is one-to-one, the set $I$ can be identified with a subset of infinite paths $\Omega := E(I)$ in $\Lambda^{\infty}$ and the maps $\{\tilde{\sigma}_{\lambda}:\lambda\in\Lambda\}$ and $\{\tilde{\sigma}^n\;:n\in\mathbb N^k\}$ can be identified with the corresponding shifts and coding maps on the subset $E(I)=:\Omega$ of $\Lambda^{\infty}.$
\end{cor}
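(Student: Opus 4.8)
The plan is to verify that the injectivity of the encoding map $E$ allows us to transport all the relevant combinatorial structure from the index set $I$ to the image $\Omega = E(I) \subseteq \Lambda^\infty$. The key observation is that Proposition~\ref{prop:encoding-map} already establishes the crucial intertwining identities $\sigma_\lambda \circ E = E \circ \tilde\sigma_\lambda$ (on $J_\lambda$) and $\sigma^n \circ E = E \circ \tilde\sigma^n$; the only thing the injectivity of $E$ adds is that these identities can be read as genuine \emph{definitions} of the action of $\sigma_\lambda$ and $\sigma^n$ on $\Omega$, rather than merely relations satisfied by $E$.

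First I would record that, since $E$ is injective, the inverse $E^{-1}: \Omega \to I$ is a well-defined bijection, so that $I$ and $\Omega$ are identified as sets. Under this identification, I would check that $E$ carries the domain set $J_\lambda \subseteq I$ onto $Z(s(\lambda)) \cap \Omega$ and the range set $K_\lambda$ onto $Z(\lambda) \cap \Omega$: indeed $i \in K_\lambda$ precisely when $E(i)((0,d(\lambda))) = \lambda$, i.e.~$E(i) \in Z(\lambda)$, by the very definition \eqref{eq:encoding} of the encoding map; and $i \in J_\lambda = J_{s(\lambda)}$ precisely when $r(E(i)) = s(\lambda)$, i.e.~$E(i) \in Z(s(\lambda))$. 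Next I would use the two identities of Proposition~\ref{prop:encoding-map}(b) to conclude that, for $\omega = E(i) \in Z(s(\lambda)) \cap \Omega$, the prefixing map satisfies $\sigma_\lambda(\omega) = E(\tilde\sigma_\lambda(E^{-1}(\omega)))$, so $\tilde\sigma_\lambda = E^{-1} \circ \sigma_\lambda \circ E$ on $J_\lambda$; and likewise $\tilde\sigma^n = E^{-1} \circ \sigma^n \circ E$ on all of $I$. These two equations are exactly the statement that, after identifying $I$ with $\Omega$ via $E$, the abstract permutation maps $\tilde\sigma_\lambda$ and $\tilde\sigma^n$ coincide with the restrictions to $\Omega$ of the genuine prefixing and shift maps $\sigma_\lambda$ and $\sigma^n$ on $\Lambda^\infty$.

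Since this corollary is a direct repackaging of an already-proved proposition under the extra hypothesis of injectivity, there is no serious technical obstacle; the entire content is the bookkeeping of the set-identification. The one point that merits a sentence of care is confirming that the compositions make sense on the correct domains: $\tilde\sigma_\lambda$ is only defined on $J_\lambda$, and correspondingly $\sigma_\lambda$ is only defined on $Z(s(\lambda))$, so the identification of partial maps must be stated with matching domains. I would therefore phrase the conclusion as: for each $\lambda \in \Lambda$ the bijection $\tilde\sigma_\lambda: J_\lambda \to K_\lambda$ is identified, via $E$, with the restriction $\sigma_\lambda: Z(s(\lambda)) \cap \Omega \to Z(\lambda) \cap \Omega$, and for each $n \in \N^k$ the map $\tilde\sigma^n: I \to I$ is identified with $\sigma^n|_\Omega$. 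This completes the identification and hence the proof.
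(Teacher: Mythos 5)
Your proof is correct and follows essentially the same route as the paper: both deduce the identification by inverting $E$ on $\Omega$ and applying the intertwining identities of Proposition~\ref{prop:encoding-map}(b) to get $\tilde\sigma_\lambda = E^{-1}\circ\sigma_\lambda\circ E$ on $J_\lambda$ and $\tilde\sigma^n = E^{-1}\circ\sigma^n\circ E$ on $I$. Your additional bookkeeping that $E$ carries $J_\lambda$ onto $Z(s(\lambda))\cap\Omega$ and $K_\lambda$ onto $Z(\lambda)\cap\Omega$ is a harmless (and welcome) elaboration of what the paper leaves implicit.
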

\begin{proof}
Since $E$ is a bijection from $I$ onto $\Omega\subset \Lambda^{\infty},$ the map $E^{-1}:\Omega\to I$ is well-defined, and we obtain 
from Proposition~\ref{prop:encoding-map}
\[
E^{-1}\circ \sigma_{\lambda}\circ E(i)\;=\;\tilde{\sigma}_{\lambda}(i)\quad\text{for $i\in J_{\lambda}$},\quad\text{and}\quad E^{-1}\circ\sigma^n\circ E(i)\;=\; \tilde{\sigma}^n(i)\quad \text{for $i\in I$.} \qedhere
\]
\end{proof}

\subsection{Permutative representations and $\Lambda$-semibranching function systems}
\label{sec:perm-and-sbfs}
The objective of this section is to prove, in Proposition \ref{prop:permutative-are-sbfs},  that any permutative representation of a finite source-free $k$-graph arises from a $\Lambda$-semibranching function system on a discrete measure space.  We begin by recalling the relevant definitions.
\begin{defn}
\label{def-1-brach-system}\cite[Definition~2.1]{MP}\label{defn:sbfs}
Let $(X,\mu)$ be a measure space. Suppose that, for each $1\le i\le N$, we have a measurable map $\sigma_i:D_i\to X$, for some measurable subsets $D_i\subset X$. The family $\{\sigma_i\}_{i=1}^N$ is a \emph{semibranching function system} if the following holds:
\begin{itemize}
\item[(a)] Setting $R_i = \sigma_i(D_i),$ we have 
\[
\mu(X\setminus \cup_i R_i)=0,\quad\quad\mu(R_i\cap R_j)=0\;\;\text{for $i\ne j$}.
\]
\item[(b)] For each $i$, the Radon--Nikodym derivative
\[
\Phi_{\sigma_i}=\frac{d(\mu\circ\sigma_i)}{d\mu}
\]
satisfies $\Phi_{\sigma_i}>0$, $\mu$-almost everywhere on $D_i$.
\end{itemize}
A measurable map $\sigma:X\to X$ is called a \emph{coding map} for the family $\{\sigma_i\}_{i=1}^N$ if $\sigma\circ\sigma_i(x)=x$ for all $x\in D_i$.
\end{defn}

\begin{defn}\cite[Definition~3.2]{FGKP}
\label{def-lambda-SBFS-1}
Let $\Lambda$ be a finite $k$-graph and let $(X, \mu)$ be a measure space.  A \emph{$\Lambda$-semibranching function system} on $(X, \mu)$ is a collection $\{D_\lambda\}_{\lambda \in \Lambda}$ of measurable subsets of $X$, together with a family of prefixing maps $\{\tau_\lambda: D_{\lambda} \to X\}_{\lambda \in \Lambda}$, and a family of coding maps $\{\tau^m: X \to X\}_{m \in \N^k}$, such that
\begin{itemize}
\item[(a)] For each $m \in \N^k$, the family $\{\tau_\lambda: d(\lambda) = m\}$ is a semibranching function system, with coding map $\tau^m$.
\item[(b)] If $ v \in \Lambda^0$, then  $\tau_v = id$,  and $\mu(D_v) > 0$.
\item[(c)] Let $R_\lambda = \tau_\lambda( D_{\lambda})$. For each $\lambda \in \Lambda, \nu \in s(\lambda)\Lambda$, we have $R_\nu \subseteq D_{\lambda}$ (up to a set of measure 0), and
\[\tau_{\lambda} \tau_\nu = \tau_{\lambda \nu}\text{ a.e.}\]
\item[(d)] The coding maps satisfy $\tau^m \circ \tau^n = \tau^{m+n}$ for any $m, n \in \N^k$.  
\end{itemize}
\end{defn}
Observe that Condition  (c) implies that $D_\lambda = D_{s(\lambda)}$ for any $\lambda \in \Lambda$, and Condition (d) forces the coding maps $\tau^n$ to pairwise commute.

Given a $\Lambda$-semibranching function system on $(X, \mu)$, Theorem 3.5 of \cite{FGKP} establishes that the operators $\{S_\lambda\}_{\lambda \in \Lambda} \subseteq B(L^2(X, \mu))$, given  by 
\[ S_\lambda(f)(x) = \left(\Phi_\lambda \circ \tau^{d(\lambda)}\right)^{-1/2}(x) \cdot \chi_{R_{\lambda}}(x) \cdot f(\tau^n(x)),\]
 form a representation of $C^*(\Lambda)$. For brevity, we call such a representation a {\em $\Lambda$-semibranching representation}.  Many $\Lambda$-semibranching representations are monic;  Theorem~4.5 of \cite{FGJKP-monic} establishes that the monic $\Lambda$-semibranching representations are precisely those whose  range sets $\{R_\lambda\}_{\lambda\in \Lambda}$ generate the $\sigma$-algebra of $(X, \mu)$, up to modifications by sets of measure zero.

\begin{prop}
\label{prop:permutative-are-sbfs}
Let $\Lambda$ be a finite, source-free $k$-graph and $\{ \tau_\lambda\}_{\lambda \in \Lambda}$ a permutative representation of $C^*(\Lambda)$.  Identifying $\H$ with $\ell^2(I)$ via the bijection between $I$ and $\{ e_i: i \in I\} \subseteq \H$, we can view $\{t_\lambda\}_{\lambda \in \Lambda}$ as a $\Lambda$-semibranching representation.
\end{prop}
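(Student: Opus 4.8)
The plan is to realize $\{t_\lambda\}_{\lambda\in\Lambda}$ via a $\Lambda$-semibranching function system on the discrete measure space $(I,\mu)$, where $\mu$ is counting measure, using the identification $\H\cong\ell^2(I)=L^2(I,\mu)$ given by $e_i\leftrightarrow\chi_{\{i\}}$. The semibranching data come directly from the permutative structure: I would set $D_\lambda:=J_\lambda$, take the prefixing maps $\tau_\lambda:=\tilde{\sigma}_\lambda:J_\lambda\to K_\lambda$ (so that $R_\lambda=\tau_\lambda(D_\lambda)=K_\lambda$), and take the coding maps $\tau^m:=\tilde{\sigma}^m$ from Proposition~\ref{prop:encoding-map}(a). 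The goal is then to verify the axioms of Definition~\ref{def-lambda-SBFS-1} and to check that the associated $\Lambda$-semibranching representation is precisely $\{t_\lambda\}_{\lambda\in\Lambda}$.

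Most of the axioms translate directly from the earlier results. For Definition~\ref{def-lambda-SBFS-1}(a), fix $m\in\N^k$: Definition~\ref{defpermutative}(a) gives $\bigcup_{d(\lambda)=m}R_\lambda=\bigcup_{d(\lambda)=m}K_\lambda=I$, while Lemma~\ref{lem:permutative} gives $K_\lambda\cap K_{\lambda'}=\emptyset$ for distinct $\lambda,\lambda'\in\Lambda^m$. Since $\mu$ is counting measure and each $\tilde{\sigma}_\lambda$ is injective, the Radon--Nikodym derivative $\Phi_{\tau_\lambda}$ is identically $1$, hence positive, and Proposition~\ref{prop:encoding-map}(a) shows $\tau^m\circ\tau_\lambda=\tilde{\sigma}^m\circ\tilde{\sigma}_\lambda=\id$ on $J_\lambda$, so $\tau^m$ is a coding map for $\{\tau_\lambda:d(\lambda)=m\}$. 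Axiom (b) holds because each $t_v$ is a projection, which forces $\tilde{\sigma}_v=\id$ on $J_v$ and hence $\tau_v=\id$; moreover $\mu(D_v)=|J_v|>0$ whenever $t_v\neq 0$, which we may assume (otherwise we pass to the sub-$k$-graph supporting the representation). Axiom (c) is immediate from Definition~\ref{defpermutative}(b), which yields $R_\nu=K_\nu\subseteq J_\lambda=D_\lambda$ for $\nu\in s(\lambda)\Lambda$ and $\tau_\lambda\tau_\nu=\tilde{\sigma}_\lambda\circ\tilde{\sigma}_\nu=\tilde{\sigma}_{\lambda\nu}=\tau_{\lambda\nu}$.

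The axiom requiring a genuine (if short) computation is (d), the commutation $\tau^m\circ\tau^n=\tau^{m+n}$, and I expect this to be the main obstacle, since it is the only point where the factorization property and the semigroup law of the coding maps must interact. I would argue directly: given $i\in I$, let $\mu\in\Lambda^{m+n}$ be the unique path with $i\in K_\mu$ and factor $\mu=\lambda\nu$ with $d(\lambda)=n$ and $d(\nu)=m$. Combining $\tilde{\sigma}_{\lambda\nu}=\tilde{\sigma}_\lambda\circ\tilde{\sigma}_\nu$ with $K_\nu\subseteq J_\lambda$ shows $i\in K_\mu\subseteq K_\lambda$, so that $\tilde{\sigma}^n(i)=\tilde{\sigma}_\lambda^{-1}(i)\in K_\nu$ and then $\tilde{\sigma}^m(\tilde{\sigma}^n(i))=\tilde{\sigma}_\nu^{-1}(\tilde{\sigma}_\lambda^{-1}(i))$; on the other hand $\tilde{\sigma}^{m+n}(i)=\tilde{\sigma}_\mu^{-1}(i)=\tilde{\sigma}_\nu^{-1}(\tilde{\sigma}_\lambda^{-1}(i))$, so the two agree.

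Finally, I would check that the $\Lambda$-semibranching representation $\{S_\lambda\}$ built from this data, via Theorem~3.5 of \cite{FGKP}, coincides with $\{t_\lambda\}$. Because $\mu$ is counting measure, the prefactor $(\Phi_\lambda\circ\tau^{d(\lambda)})^{-1/2}$ in the defining formula equals $1$, so $S_\lambda(\chi_{\{i\}})(x)=\chi_{K_\lambda}(x)\,\chi_{\{i\}}(\tilde{\sigma}^{d(\lambda)}(x))$. For $i\in J_\lambda$ this indicator is supported on the single point $x=\tilde{\sigma}_\lambda(i)$, and it vanishes identically for $i\notin J_\lambda$; hence $S_\lambda(\chi_{\{i\}})=\chi_{\{\tilde{\sigma}_\lambda(i)\}}$ when $i\in J_\lambda$ and $S_\lambda(\chi_{\{i\}})=0$ otherwise. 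Under $e_i\leftrightarrow\chi_{\{i\}}$ this is exactly the action prescribed in Definition~\ref{defpermutative}(c), so $S_\lambda=t_\lambda$ and the proof is complete.
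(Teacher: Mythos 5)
Your proposal is correct and follows essentially the same route as the paper: identify $D_\lambda$ with $J_\lambda$, the prefixing maps with $\tilde{\sigma}_\lambda$, the range sets with $K_\lambda$, note the Radon--Nikodym derivatives are $1$, and verify that the resulting $\Lambda$-semibranching representation sends $\chi_{\{i\}}$ to $\chi_{\{\tilde{\sigma}_\lambda(i)\}}$. You simply carry out explicitly the verifications (notably axiom (d) and the nondegeneracy $\mu(D_v)>0$) that the paper dismisses as ``straightforward computations.''
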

\begin{proof}
In this setting, the domain sets $D_\lambda\subseteq I$ are given by $J_\lambda$ and the maps $\tilde \sigma_\lambda: J_\lambda \to K_\lambda$ are the prefixing maps $\tau_\lambda$.  The range sets $R_\lambda$ are therefore identified with $K_\lambda$, and the Radon--Nikodym derivatives are constantly equal to 1: 
\[ \Phi_\lambda = \frac{|K_\lambda|}{|J_\lambda| } = 1\]
since $\tilde \sigma_\lambda$ is a bijection.  
Proposition \ref{prop:encoding-map}  describes the coding maps $\tilde{\sigma}^n$ of the $\Lambda$-semibranching function system, and straightforward computations reveal that the conditions of Definition \ref{def-lambda-SBFS-1} are satisfied.
Moreover, the associated $\Lambda$-semibranching representation $\{S_\lambda\}_{\lambda\in \Lambda}$ satisfies 
\[ S_\lambda(e_i) = S_\lambda(\chi_{\{i\}}) = \begin{cases} \chi_{\{ \tilde \sigma_\lambda(i)\}} =  e_{\tilde \sigma_\lambda(i)}, & i \in J_\lambda \\
0, & \text{ else.}
\end{cases}\]
In other words, the $\Lambda$-semibranching representation agrees with the permutative representation $\{t_\lambda\}_{\lambda \in \Lambda}$, as claimed.  
\end{proof}

\begin{rmk}
It follows from the above Proposition that 
 the faithful separable representation discussed in Theorem~5.1 of \cite{FGJKP-SBFS} is also a permutative representation.
\end{rmk}

\subsection{Decomposition of permutative representations}

Theorem \ref{thm-decomp-perm-repres} below is an extension of Theorem 4.13 of \cite{dutkay-jorgensen-atomic} from the case of the Cuntz algebras $\mathcal O_N$ to the much broader setting of higher-rank graph $C^*$-algebras.  Before stating the Theorem, we recall a classical definition.
 \begin{defn}[\cite{KP}]
We say that a $k$-graph $\Lambda$ is \emph{aperiodic} if for each $v\in\Lambda^0$, there exists $x\in v\Lambda^\infty$ such that for all $m\ne n\in\N^k$ we have $\sigma^m(x)\ne \sigma^n(x)$.
 \end{defn}
\begin{thm}
\label{thm-decomp-perm-repres}
Let $\Lambda$ be a row-finite $k$-graph with no sources and let $\{t_\lambda:\lambda\in \Lambda\}$ be a purely atomic representation of $C^*(\Lambda)$ on a Hilbert space $\mathcal{H}$. If the representation
is supported on an {orbit of aperiodic paths}, then the representation is permutative. Moreover the representation can be
decomposed into a direct sum of permutative representations with injective encoding maps.
\end{thm}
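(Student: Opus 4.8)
The plan is to build an explicit permuted orthonormal basis adapted to the atoms of $P$, transporting a fixed basis of $\operatorname{Range}(P(\{\omega\}))$ around the orbit by means of the partial isometries $t_\lambda$. Write $\Omega=\operatorname{Orbit}(\omega)$ with $\omega$, and hence every path in $\Omega$, aperiodic. First I would record that all atoms on the orbit share a common dimension $d$: by Proposition~\ref{prop:atomic-2}(a) the operators $t_\lambda$ and $t_{\omega(0,n)}^*$ restrict to isometries carrying $\operatorname{Range}(P(\{\omega\}))$ onto $\operatorname{Range}(P(\{\lambda\omega\}))$ and $\operatorname{Range}(P(\{\sigma^n(\omega)\}))$, so every $\operatorname{Range}(P(\{\gamma\}))$, $\gamma\in\Omega$, has dimension $d$. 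Fix an orthonormal basis $\{f_s\}_{s\in S}$ of $\operatorname{Range}(P(\{\omega\}))$ with $|S|=d$. For $\gamma\in\Omega$, choose a decomposition $\gamma=a\,\sigma^j(\omega)$ with $a\in\Lambda$, $j\in\N^k$, and set $e_{\gamma,s}:=t_a t_{\omega(0,j)}^*f_s$.

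The crux is to show $e_{\gamma,s}$ is independent of the chosen decomposition, and this is exactly where aperiodicity enters. If $\gamma=a\sigma^j(\omega)=a'\sigma^{j'}(\omega)$, then $\sigma^{d(a)}(\gamma)=\sigma^j(\omega)$ and $\sigma^{d(a')}(\gamma)=\sigma^{j'}(\omega)$; passing to the common shift $\sigma^{d(a)+d(a')}(\gamma)$ and using that $m\mapsto\sigma^m(\omega)$ is injective forces the displacement $j-d(a)=j'-d(a')=:\delta_\gamma\in\Z^k$ to be well defined. Approximating $P(\{\omega\})=\lim_N P(Z(\omega(0,N\mathbf{1})))$ in the strong operator topology and simplifying with the Cuntz--Krieger relations, both $t_a t_{\omega(0,j)}^*P(Z(\omega(0,N\mathbf{1})))$ and $t_{a'} t_{\omega(0,j')}^*P(Z(\omega(0,N\mathbf{1})))$ collapse to $t_B\,t_{\omega(0,N\mathbf{1})}^*$, where $B=\gamma(0,N\mathbf{1}-\delta_\gamma)$ is the common prefix of $\gamma$ determined by the single displacement $\delta_\gamma$; letting $N\to\infty$ yields $t_a t_{\omega(0,j)}^*f_s=t_{a'} t_{\omega(0,j')}^*f_s$. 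This is the higher-rank analogue of the well-definedness step in the proof of Theorem~\ref{thm-atomic-repres}, specialized to a single representation. Without aperiodicity the two prefixes would have different degrees and the construction would break down.

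Next I would verify that $\{e_{\gamma,s}:\gamma\in\Omega,\ s\in S\}$ is an orthonormal basis permuted by the representation. A short computation with Proposition~\ref{prop:atomic-2}(a) shows that $V_\gamma:=t_a t_{\omega(0,j)}^*$ satisfies $V_\gamma^*V_\gamma=P(\{\omega\})$ and $V_\gamma V_\gamma^*=P(\{\gamma\})$, so $V_\gamma$ is unitary from $\operatorname{Range}(P(\{\omega\}))$ onto $\operatorname{Range}(P(\{\gamma\}))$ and $\{e_{\gamma,s}\}_{s}$ is an orthonormal basis of the latter; since $\bigoplus_{\gamma\in\Omega}P(\{\gamma\})=\operatorname{Id}_{\mathcal H}$, the full family is an orthonormal basis of $\mathcal H$. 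The key identity is $t_\lambda e_{\gamma,s}=t_{\lambda a}t_{\omega(0,j)}^*f_s=e_{\lambda\gamma,s}$ when $s(\lambda)=r(\gamma)$, and $t_\lambda e_{\gamma,s}=0$ otherwise. Setting $I=\Omega\times S$, $\tilde\sigma_\lambda(\gamma,s)=(\lambda\gamma,s)$, $J_\lambda=\{(\gamma,s):r(\gamma)=s(\lambda)\}$ and $K_\lambda=\{(\delta,s):\delta\in Z(\lambda)\}$, the axioms of Definition~\ref{defpermutative} then follow: the covering conditions from the invariance of $\Omega$ under $\sigma^n$ and $(\sigma^n)^{-1}$ together with source-freeness, the cocycle identity $\tilde\sigma_\lambda\circ\tilde\sigma_\nu=\tilde\sigma_{\lambda\nu}$ from $t_\lambda t_\nu=t_{\lambda\nu}$, and condition (d) from the relation $t_{\lambda'}^*t_\lambda=\delta_{\lambda,\lambda'}t_{s(\lambda)}$ on $\Lambda^n$.

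Finally, for the decomposition, I would observe that the basis index $s$ is preserved by the representation: $t_\lambda e_{\gamma,s}\in\{e_{\lambda\gamma,s},0\}$ and $t_\lambda^* e_{\delta,s}\in\{e_{\gamma,s},0\}$. Hence each $\mathcal H_s:=\overline{\operatorname{span}}\{e_{\gamma,s}:\gamma\in\Omega\}$ is a reducing subspace, and $\mathcal H=\bigoplus_{s\in S}\mathcal H_s$ decomposes the representation into subrepresentations $\pi_s$. Each $\pi_s$ is permutative with permuted basis $\{e_{\gamma,s}\}_{\gamma\in\Omega}$ indexed by $\Omega$ itself, all of its atoms are one-dimensional, and its encoding map $E_s:\Omega\to\Lambda^\infty$ sends $\gamma\mapsto\gamma$ (since $e_{\gamma,s}\in K_\lambda$ exactly when $\gamma\in Z(\lambda)$), which is injective. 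I expect the well-definedness argument of the second paragraph to be the only substantial obstacle; the remaining steps are routine manipulations of the Cuntz--Krieger relations and the projection valued measure identities of Proposition~\ref{prop:atomic-2}.
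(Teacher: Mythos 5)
Your proposal is correct and follows essentially the same route as the paper: transport a fixed orthonormal basis of $\operatorname{Range}(P(\{\omega\}))$ around the orbit via $t_a t_{\omega(0,j)}^*$, read off the permutative structure on the index set $\Omega\times S$, and split $\mathcal H$ into the reducing subspaces obtained by fixing the basis label. In fact you treat the well-definedness of $e_{\gamma,s}$ (via the aperiodicity-determined displacement $j-d(a)$ and the strong-operator approximation by cylinder-set projections) more explicitly than the paper, which simply asserts uniqueness of the decomposition $\gamma=a\sigma^j(\omega)$; this is a welcome clarification rather than a deviation.
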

\begin{proof}
Let $P$ be the projection valued measure associated to the representation $\{t_\lambda:\lambda\in \Lambda\}$  and suppose it is supported on $\Omega\subset \Lambda^\infty,$ which by our earlier results can be decomposed into orbits corresponding to a decomposition of the original representation.  So let us assume that our set $\Omega$ is equal to a single orbit of the aperiodic path $\omega\in \Omega\subset \Lambda^{\infty}.$ As in the proof of Theorem 4.13 of \cite{dutkay-jorgensen-atomic}, let $\{e_{\omega,\ell}\}_{\ell\in {\mathcal J}}$ be an orthonormal basis for $P(\{\omega\})\mathcal{H}$ for an index set $\mathcal{J}$. 

We know that every point in the orbit of $\omega$ is of the form $a\sigma^j(\omega),$ for some finite path $a$ and an element $j\in\mathbb N^k.$ Moreover, since $\omega$ is an aperiodic path, this decomposition is unique. We define an orthonormal basis on $P(\{a\sigma^j(\omega)\})\mathcal{H}$ by $\{t_at^*_{\omega(0,j)}e_{\omega,\ell}:=e_{a\sigma^j(\omega),\ell}\}_{\ell\in {\mathcal J}}.$ The results of our previous sections show that this is indeed an orthonormal basis for $P(\{a\sigma^j(\omega)\})\mathcal{H}.$ Since 
$$\Omega=\bigcup_{\gamma\in \Omega}\{\gamma\}\;=\;\bigcup_{j\in \mathbb N^k}\bigcup_{a\in \Lambda: s(a)=r(\sigma^j(\omega))}\{a\sigma^j(\omega)\},$$
we have 
$$\text{Id}_{\mathcal{H}}\;=\;P(\Omega)=\sum_{j\in \mathbb N^k} \sum_{a\in \Lambda: s(a)=r(\sigma^j(\omega))} P(\{a\sigma^j(\omega)\}),$$
where the sum converges in the strong operator topology.
Therefore, an orthonormal basis for ${\mathcal H}$ is given by 
$$\bigcup_{j\in \mathbb N^k} \bigcup_{a\in \Lambda: s(a)=r(\sigma^j(\omega))}\{t_at_{\omega(0,j)}^*e_{\omega,\ell}\}_{\ell\in {\mathcal J}}= \bigcup_{j\in \mathbb N^k} \bigcup_{a\in \Lambda: s(a)=r(\sigma^j(\omega))} \{e_{a\sigma^j(\omega),\ell}\}_{\ell\in {\mathcal J}}.$$  
One easily checks that setting
$${\mathcal H}_\ell\;=\;\overline{\text{span}}\bigcup_{j\in \mathbb N^k} \bigcup_{a\in \Lambda: s(a)=r(\sigma^j(\omega))}\{t_at_{\omega(0,j)}^*e_{\omega,\ell}=e_{a\sigma^j(\omega),\ell}\}\;=\;\{e_{\gamma,\ell}:\;\gamma\in \Omega\},$$
each ${\mathcal H}_\ell$ is an invariant subspace for the representation, and
$${\mathcal H}\;=\;\bigoplus_{\ell \in {\mathcal J}}{\mathcal H}_\ell.$$
Thus, in this case, our index set for the orthonormal basis for ${\mathcal H}$ is given by:
$$I:=\;\{(a\sigma^j(\omega)=\gamma,\ell):\; \gamma\in \Omega,\; \ell\in {\mathcal J}\}.$$

Returning to our notation in Definition \ref{defpermutative}, if $a\in \Lambda,$ 
we have  
$$J_a\;=\;\{(\gamma,\ell): \ell\in {\mathcal J},\;\gamma\in \Omega, |s(a)=r(\gamma)\}; \qquad K_a\;=\;\{(\gamma,\ell):\ell\in {\mathcal J},\;\gamma\in \Omega,\;\gamma(0,d(a))=a\}.$$
The maps 
$\tilde{\sigma}_a:J_a\to K_a$
are given by 
$$\tilde{\sigma}_a((\gamma,\ell))\;=\;(a\gamma,\ell),$$ and the coding maps are given by 
$\tilde{\sigma}^n(\gamma,\ell)\;=\;(\sigma^n(\gamma),\ell).$
The encoding map $E: I \to \Lambda^\infty$ is then given by
$$E((\gamma,\ell))\;=\;\gamma.$$

One easily calculates that the  conditions of Definition \ref{defpermutative} hold.  To complete the proof, one observes that restricting the encoding map $E$ to the basis 
\[I_\ell: = \{(\gamma, \ell):\gamma \in \Omega\}\]
for the subspace ${\mathcal H}_{\ell}$, results in an injective map. 
\end{proof}

\end{document}